\documentclass[11pt]{article}
\usepackage[english]{babel}

\usepackage{amsmath}
\usepackage{amsthm}
\usepackage{amsfonts,amssymb,latexsym,graphicx,psfrag}
\usepackage{version}
\usepackage{verbatim}
\usepackage{mathrsfs} 
\hoffset -1cm
\textwidth 14 cm
\topmargin -1.0 cm
\textheight 22 cm
%\documentclass[11pt]{article}
%\usepackage{amssymb}
%\usepackage{amsmath}
%\renewcommand{\baselinestretch}{1}
%\hoffset fz-2 cm
%\documentstyle[A4]{report}
%\usepackage{graphicx}
%\topmargin -1.0 cm
%\textheight 25 cm
%\textheight 24 cm
%\numberwithin{equation}{section}
%\setcounter{section}{0}
%cccccccf%%%%%%%%%%%%%%%%%%%%%%%%%%%%%%%%
% \usepackage{showkeys}  
 %%%%%% show labels %%%%%%%%%%%%%%%
%%%%%%%%%%%%%%%%%%%%%%%%%%%%%%%%%%%%%%%%%%%

% % % THEOREMS -----------------------------------------------------------------------------------------------------------------
\newtheorem{thm}{Theorem}[section]

\newtheorem{lem}[thm]{Lemma}

\theoremstyle{definition}

\theoremstyle{remark}

\numberwithin{equation}{section}
% ---------------------------------------------------------------------------------------------------------------

\def \<{\langle}
\def \>{\rangle}

\begin{document}
\title{Necessary conditions for adverse control problems expressed by relaxed derivatives}
%\author{M. Palladino\thanks{} and R. B. Vinter\thanks{}}
\author{M. Palladino \thanks{Department of Mathematics, Penn State University, University Park, 16802, PA, USA {\tt\small mup26@psu.edu}}}
\maketitle
% -----------------------------------------------------------------------------------------

\begin{abstract}

This paper provides a framework for deriving a new set of necessary conditions for adverse control problems among two players. The distinguish feature of such problems is that the first player has a priori knowledge on the second player strategy. A subclass of adverse control problems is the one of minimax control problems, which frequently arise in robust dynamic optimization. The conditions derived in this manuscript are expressed in terms of relaxed derivatives \cite{key-6}: the dual variables and the related functions are limits of computable sequences, obtained by considering a regularized version of the original problem and applying well known necessary condition \cite{key-4}. This topic was initially treated by J. Warga in \cite{key-7}.
%\keywords{ Necessary Conditions \and Optimal Control \and Ordinary Differential Equations \and Relaxation \and Generalized Derivatives \and Conflicting Controls}
% \PACS{PACS code1 \and PACS code2 \and more}
%\subclass{MSC 49K15 \and MSC 49K35}
\end{abstract}
{\small 

\noindent

}

\bigskip
\noindent
{\small Keywords: Necessary Conditions, Optimal Control, Differential Inclusions, State Constraints.
}

% -----------------------------------------------------------------------------------------------------------------------
% --------------------------------------------------Introduction----------------------------------------------------
\section{Introduction}
\label{intro}
In this paper, we consider adverse control problems between two players
described by differential equations
\[
y(t)=b+\intop_{t_{0}}^{t}f(s,y(s),u(s))ds
\]
\[
\tilde{y}(t)=\tilde{b}+\intop_{t_{0}}^{t}\tilde{f}(s,\hat{y}(s),u(s),v(s))ds.
\]
In this model, $u(.),$ $b,$ and $y(.)$ are, respectively, the control,
the initial state and the state trajectory of the first player, while
$v(.)$, $\tilde{b}$ and $\tilde{y}(.)$ are, respectively, the control,
the initial state and the state trajectory of the second player; $\hat{y}(.)=(y(.),\tilde{y}(.))$
is the state trajectory of the control system and $\hat{b}=(b,\tilde{b})$ is its initial condition at time $t_{0}$. Adverse control problems
concern the choice of $(u(.),b,\tilde{b})$ that minimizes a given function
$h_{0}(u,b)$ and satisfies constraints expressed in terms of 
\[
h_{1}(u,b)=0,\qquad\hat{h}(u,v,b,\tilde{b})\leq0
\]
for every admissible strategy $v(.).$

Adverse control problems have some formal analogy with differential
games, but yield a priori information to the second player about the
first player strategy. The model formulation that we deal with emphasizes this different feature by
 decoupling the player one trajectory $y(.)$ from the player
two trajectory $\tilde{y}(.),$ instead of considering a joint differential
equation 
\[
\hat{y}(t)=\hat{b}+\intop_{t_{0}}^{t}\hat{f}(s,\hat{y}(s),u(s),v(s))ds,
\]
as it is usually taken into account in the differential game framework. A
particular case of adverse control problems are minimax problems,
in which 
\[
h_{0}(u,b)=\sup_{v}\hat{h}_{0}(u,b,v)
\]
and the minimization process is carried out following some ``worst
case'' criteria. A deeper exposition on the topic is presented in the monograph \cite{BoPo}.

Adverse control problems were extensively studied by Warga in his
monograph \cite{key-4}, in which he proposed two extensions of the
original problem, aimed to guarantee the existence of a solution.
He denotes such enlarged problems as relaxed and hyperrelaxed, respectively.
The relaxed extension can be properly used to model the case in which
the function $\tilde{f}$ is additively coupled with respect to the
control strategies of players one and two respectively, that is:
\[
\tilde{f}(t,\hat{y},u,v)=\tilde{f}_{1}(t,\hat{y},u)+\tilde{f}_{2}(t,\hat{y},v);
\]
another case in which the relaxed extension can be successfully applied
is when the second player does not have perfect means to detect the
value $u(t),$ but can just detect an average of values of $u(.)$
over short intervals of time. In all the other cases (which means,
when $\tilde{f}$ assumes a general form and when the second player
can acquire information on the value $u(t)$), the relaxed problem
can fail to provide the ``right'' value of the adverse control problem:
in other words, it can occur that the value of the relaxed extension
is lower than the value of the original problem, even for smooth
dynamics (as it is showed in \cite{key-4}). This lack of properness
motivates the attention for the hyperrelaxed extension: in this setting,
the second player gains more freedom in the choice of the control
strategy, making the hyperrelaxed problem formulation ``fair'', in the sense that the
value of the hyperrelaxed extension does not change with respect to
the original one. In \cite{key-4} and \cite{key-7}, Warga proves the properness of
the hyperrelaxed problem and the existence of a sequence of original
controls which approximates the hyperrelaxed problem solution. 

	In the same monograph, necessary conditions both for relaxed and
hyperrelaxed problems, are derived in the case of smooth data. The nonsmooth setting is 
considered in \cite{key-5}, where necessary conditions are derived using the notion of Warga derivative
container (see \cite{key-9}, \cite{key-8}). However, as it is explained in (\cite{key-5}, section 3), there are some technical difficulties (related to the measurability of the relaxed and hyperrelaxed hamiltonians) which
prevent to obtain a ``pointwise" maximum principle strong as much as in the smooth setting.\\
A different approach to the particular case in which minimax problems are considered is provided in \cite{key-3}, where the set of ``adverse trajectories" is identified with a compact metric space. Necessary conditions are expressed in terms of a nonsmooth Pontryagin maximum principle in which the adjoint equation and the transversality condition are modified in order to gain good compactness properties in the proofs.\\
\\
\noindent
The aim of this paper is to provide a new set of necessary conditions for adverse control problems 
which have a stronger resemblance with the necessary conditions established in \cite{key-4} for the smooth setting. Indeed, the main results, collected in theorems \ref{thm:Hyper-Relaxed} and \ref{thm:Relaxed Problem}, provide a form of Pontryagin maximum principle in which the pointwise maximum condition is still preserved and the adjoint equations have a limit representations. \\
 The key idea of the proofs is to define a sequence of perturbed smooth problems for which well-known necessary conditions \cite{key-4} apply and the couples solutions/multipliers converge to a couple solution/multiplier of the hyper relaxed (or relaxed) problem. 
We do not make use of variational principles, but we regularize the data by convolution techniques. 
We do not provide direct substitute to the non-existing derivative of nonsmooth data, but we cope with the convolution integrals, using the concept of ``relaxed derivative" established in \cite{key-6}.\\
We stress that the sequence solution/multipliers generated in the main proofs could be computed in many cases of interest and no a priori information on the minimizers of the adverse control problem is required. \\

The paper is organized as follows: in sections 2-5 we describe notations, a precise statement of the problem, an overview on relaxation and hyperrelaxation schemes and the assumptions that we refer to through all the paper; in sections 6-8 we provide some tools, lemmas and convergence results for Fredholm approximations and relaxed derivatives; finally, in section 9, the main theorems and their proofs can be found.

\section{Preliminaries and Notations}
\label{sec:1}
In this section, we introduce notations and basic concepts which we
will use through all the paper.

Given a compact set $K,$ we denote as $C(K)$ the set of all continuous
functions defined on $K.$ It is well known that the set $C^{*}(K)$
(the set of linear and continuous functionals defined on $C(K)$)
can be identified with the set of finite Radon measures on $K$,
which we denote as $\mathrm{f.r.m.}(K).$ Further we denote as $\mathrm{f.r.m.}^{+}(K)$
the set of finite positive Radon measures and by $\mathrm{r.p.m.}(K)$
the set of  Radon probability measures. We denote also as $\mathcal{B}(K)$
the Borel $\sigma-$field on $K$ and for every $\mu\in C^{*}(K),$
we denote as $\mu(K)$ the norm in total variation of $\mu.$

Given a measure space $(X,\mu,\mathcal{F})$ and a metric space $(Y,d),$
we denote as $L^{1}(X,\mu,Y):=\mathcal{L}(X,\mu,Y)/\sim,$ where $\mathcal{L}(X,\mu,Y)$
is the set of the $\mu$-integrable functions $f:X\rightarrow Y$ defined at every point $x\in X$
such that $\intop|f(x)|\mu(dx)<\infty$ and $\sim$ is the equivalence
ralation $f\sim g$ iff $f=g$ $\mu$-a.e. In the paper, we use the notation $L^{1}(X,\mu)$ or $L^{1}(\mu,Y)$ when there is no disambiguation in the codomain or the domain, respectively.

Given a set $A$, we denote as $\mathrm{co}\, A$ the convex hull of $A$. Finally, we denote as $\mathbb{B}$ the closed unit ball in the euclidean space with suitable dimension, as $\mathscr{P}(K)$ the power set of $K$ and as $M_{r\times k}$ the set of matrices with $r$ rows and $k$ columns.

\section{Original Problem Statement}
\label{sec:Problem-Statement}

Consider the adverse control problem
\[
(OP)\,\left\{ \begin{array}{l}
\mathrm{Minimize}_{u\in \mathcal{U}} \,h_{0}(y(u)(t_{1}))\\
\mathrm{over\: measurable\: functions\:}u(.), v(.)\mathrm{\: such\: that}\\
u(t)\in U(t ),\quad v(t)\in V(t)\qquad\mathrm{a.e.}\; t\in[t_{0},t_{1}]\\
\mathrm{such\: that,\: for\: each}\:v(.),\\
\dot{y}(t)=f(t,y(t),u(t))\quad\mathrm{a.e.}\: t\in[t_{0},t_{1}]\\
\dot{\tilde{y}}(t)=\tilde{f}(t,\hat{y}(t),u(t),v(t)) \quad \mathrm{a.e.}\: t\in[t_{0},t_{1}]\\
y(t_{0})=b\in B,\quad \tilde{y}(t_{0})=\tilde{b}\in \tilde{B}\quad\mathrm{and}\quad h_{1}(y(u)(t_{1}))=0\\
\hat{h}(y(u,v)(t_{1}))\leq 0 \quad \mathrm{ for\: each}\:v\in \mathcal{V}
\end{array}\right. ,
\]
\begin{comment}
\[
y(t)=b+\intop_{t_{0}}^{t}f(s,y(s),u(s))ds
\]
\[
\tilde{y}(t)=b_{P}+\intop_{t_{0}}^{t}\tilde{f}(s,\hat{y}(s),u(s),v(s))ds,\qquad t\in[t_{0},t_{1}],
\]
\end{comment}
where $f:[t_{0},t_{1}]\times\mathbb{R}^{n}\times U\rightarrow\mathbb{R}^{n},$
$\tilde{f}:[t_{0},t_{1}]\times\mathbb{R}^{n+m}\times U\times V\rightarrow\mathbb{R}^{m}$, $h_{0}:\mathbb{R}^{n}\rightarrow\mathbb{R},$
$h_{1}:\mathbb{R}^{n}\rightarrow\mathbb{R}$ and $\hat{h}:\mathbb{R}^{n}\times\mathbb{R}^{m}\rightarrow\mathbb{R}$
are given functions, $U$ and $V$ compact metric spaces and $[t_{0}, t_{1}]$ a given interval. The
initial condition $\hat{b}:=(b,\tilde{b})$ takes values on the compact and convex
set $\hat{B}:=B\times \tilde{B}\subset\mathbb{R}^{n}\times \mathbb{R}^{m}$. It turns out that the initial condition can be regarded as a choice of control paremeters for problem $(OP)$ (cfr. \cite{key-4}). We denote as $\hat{y}=(y,\tilde{y})$, as $\hat{f}=(f,\tilde{f})$
and we sometimes emphasize the dependence on the controls writing $y(u)(t),$
$\hat{y}(u,v)(t)$. The mappings $U(.):[t_{0},t_{1}]\rightarrow\mathscr{P}(U)$
and $V(.):[t_{0},t_{1}]\rightarrow\mathscr{P}(V)$ are given Borel measurable multifunctions with
compact values and we denote as $\mathcal{U}$ (risp. $\mathcal{V}$)
the set of all measurable functions $u(.):[t_{0},t_{1}]\rightarrow U$ such that $u(t) \in U$ a.e. $t\in [t_{0},t_{1}]$.
(risp. $v(.):[t_{0},t_{1}]\rightarrow V$ such that $v(t) \in V(t)$ a.e. $t\in [t_{0},t_{1}]$).

\section{Assumptions}
In this paper, we assume the following assumptions on the data: let
$\Omega\subset\mathbb{R}^{n},$ $\tilde{\Omega}\subset\mathbb{R}^{m}$ be open
set, $\hat{\Omega}:=\Omega\times\tilde{\Omega}.$ We consider functions
\[
\hat{f}=(f,\tilde{f}):[t_{0},t_{1}]\times\hat{\Omega}\times U\times V\rightarrow\mathbb{R}^{n+m},\quad h_{i}:\Omega\rightarrow\mathbb{R},\; i=0,1,
\]
\[
\hat{h}:\hat{\Omega}\rightarrow\mathbb{R}
\]
such that 

\begin{itemize}

\item[$H1)$] $\hat{f}(.,\hat{y},u,v)$ is Lebesgue measurable for each
$(\hat{y},u,v)$ and $\hat{f}(t,.,.,.)$ is continuous a.e. $t\in[t_{0},t_{1}];$

\item[$H2)$] there exist integrable functions $\psi(.)$ and $\chi(.)$
such that
\[
|\hat{f}(t,\hat{y},u,v)-\hat{f}(t,\hat{y}',u,v)|\leq\psi(t)|\hat{y}-\hat{y}'|
\]
and 
\[
|\hat{f}(t,\hat{y},u,v)|\leq\chi(t)
\]
for every $\hat{y},\hat{y}'\in\hat{\Omega},$ $u\in U,$ $v\in V$,
a.e. $t\in[t_{0},t_{1}];$

\item[$H3)$] there exist positive numbers $L_{h_{0}},L_{h_{1}},L_{\hat{h}}\geq0$
such that
\[
|h_{i}(y)-h_{i}(y')|\leq L_{h_{i}}|y-y'|,\; i=0,1,\qquad|\hat{h}(\hat{y})-\hat{h}(\hat{y}')|\leq L_{\hat{h}}|\hat{y}-\hat{y}'|,
\]
for every $y,y'\in \Omega$ and $\hat{y},\hat{y}'\in\hat{\Omega}.$

\end{itemize}

The following remark simplifies problem $(OP)$ without loss of generality.
We introduce the new time independent variable $\tau\in[\tau_{0},\tau_{1}]$
and, given an integrable function $\phi(.)\geq1$ on $[\tau_{0},\tau_{1}],$
we define the function 
\[
t(\tau):=t_{0}+\intop_{\tau_{0}}^{\tau}\phi(s)ds,
\]
 for every $\tau\in[\tau_{0},\tau_{1}].$ Since $\phi(s)\geq1$ a.e.
$s\in[t_{0},t_{1}],$ then $t(.)$ is an increasing function and there
exists an inverse $\tau(t).$ If we suppose that the function $\hat{f}':[\tau_{0},\tau_{1}]\times \hat{\Omega}\times U\times V\rightarrow\mathbb{R}^{n+m}$
satisfies the hypotheses $H2)$ with Lipschitz constant $\phi(.),$
then we can set $\hat{g}(t,y,u,v):=[\phi(\tau(t))]^{-1}\hat{f}'(\tau(t),y,u,v)$
and $t_{1}:=t(\tau_{1}).$ It turns out that 
\[
\intop_{t_{0}}^{t_{1}}\hat{g}(s,y,u,v)ds=\intop_{\tau_{0}}^{\tau_{1}}\hat{f}'(s,y,u,v)ds
\]
and that $[\phi(\tau(t))]^{-1}\leq1.$ This implies that the new function
$\hat{g}(t,.,u,v)$ is Lipschitz continuous with a constant $L_{\hat{g}}\leq1.$

From now on, we suppose that this time transformation has been
already carried out on the function $\hat{f}$ and that there exists
a Lipschitz constant $L_{\hat{f}}\leq1.$

\section{Relaxed and Hyperrelaxed Problems}
\label{sec:Relaxed-and-Hyper-relaxed}

The adverse control problem $(OP)$ does not always admit a solution,
when we restrict the choice of controls $u(.)$ and $v(.)$ to be elements of $\mathcal{U}$ and $\mathcal{V},$ respectively.
As it is showed in \cite{key-4} and \cite{key-5}, there are two
ways to guarantee the existence of a solution for adverse control problems.

The first method concerns the symmetric relaxation of both players.
More precisely, we introduce the set of Borel measurable mappings
\[
\mathcal{S}:=\left\{ \sigma(.):[t_{0},t_{1}]\rightarrow\mathrm{r.p.m.}(U)\,:\;\sigma(t)(U(t))=1\; a.e.\; t\in[t_{0},t_{1}]\right\} ,
\]
where $\mathrm{r.p.m.}(U)$ is the set of the Radon probability measure
on $U$, and we symmetrically extend the choice for the second player
to the set of Borel measurable mappings
\[
\mathcal{S}_{P}:=\left\{ \sigma_{P}(.):[t_{0},t_{1}]\rightarrow\mathrm{r.p.m.}(V):\;\sigma_{P}(t)(V(t))=1\; a.e.\; t\in[t_{0},t_{1}]\right\} .
\]
Then we consider the new relaxed problem $(RP),$ which has the same
data of $(OP),$ but where the dynamic equations are replaced by
\[
y(\sigma)(t)=b+\intop_{t_{0}}^{t}ds\intop f(s,y(s),u)\sigma(s)(du)
\]
\[
\tilde{y}(\sigma\times\sigma_{P})(t)=b_{P}+\intop_{t_{0}}^{t}ds\intop\tilde{f}(s,\hat{y}(s),u,v)\sigma(s)(du)\times\sigma_{P}(s)(dv),
\]
where $\sigma\times\sigma_{P}$ is the product measure between $\sigma\in\mathcal{S}$
and $\sigma_{P}\in\mathcal{S}_{P}.$ The control strategies for players one and two are now elements of $\mathcal{S}$ and $\mathcal{S}_{P}$, respectively. It is showed in (\cite{key-4},
Example IX.2.2, pp 453-456) that the problems $(OP)$ and $(RP)$ can
have different values if we do not assume some special hypotheses on
the structure of the dynamic. 

We now move our attention to the hyperrelaxed extension, which does
not modify the value function of the problem, let alone special
assumptions on the structure of $\hat{f}.$ The problem is modified
as follows: the first player can still choose the control strategy in the set of
relaxed controls $\mathcal{S}$ while the second player, in order to
not modify the cost of the problem, has to pick controls up from a larger
set then $\mathcal{S}.$ These controls are mentioned as hyperrelaxed
and lie in the set
\[
\tilde{\mathcal{P}}:=\left\{ \pi(.,.):[t_{0},t_{1}]\times U\rightarrow\mathrm{r.p.m.}(V):\;\pi(t,u)(V(t))=1,\; a.e.\: t\in[t_{0},t_{1}],\;\forall\, u\in U\right\} .
\]
Roughly speaking, if we consider the set of Borel measurable mappings
\[
\mathcal{Q}:=\{\alpha:[t_{0},t_{1}]\rightarrow\mathrm{r.p.m.}(U\times V)\,:\alpha(U(t)\times V(t))=1,\; a.e.\: t\in[t_{0},t_{1}]\},
\]
then $\mathcal{S}$ can be considered as the set of the Borel measurable
mappings from $[t_{0},t_{1}]$ to the set of marginal probabilities on $U,$
while $\tilde{\mathcal{P}}$ can be regarded as the set of the mappings
from $[t_{0},t_{1}]$ to the set of the conditional probabilities on $V$
with respect to the information $u\in U.$

The hyperrelaxed problem has the same data of $(OP)$, but the
choice of controls is $\sigma\in\mathcal{S}$ for the first player
and $\pi$ belonging to a modification of $\tilde{\mathcal{P}}$ (details will be given later in the paper) for the second one. This changes
the dynamic equations as follows:
\[
y(\sigma)(t)=b+\intop_{t_{0}}^{t}ds\intop f(s,y(\sigma)(s),u)\sigma(s)(du),
\]
\[
\tilde{y}(\sigma\otimes\pi)(t)=b_{P}+\intop_{t_{0}}^{t}ds\intop\tilde{f}(s,\hat{y}(\sigma\otimes\pi)(s),u,v)\sigma(s)(du)\otimes\pi(u,s)(dv),
\]
where the symbol $\sigma\otimes\pi$ denotes the unique element in
$\mathcal{Q}$ such that 
\[
\intop_{t_{0}}^{t}ds\intop\varphi(s,u,v)\sigma(s)(du)\otimes\pi(u,s)(dv)=\intop_{t_{0}}^{t}ds\intop\sigma(s)(du)\intop\varphi(s,u,v)\pi(u,s)(dv)
\]
for every $\varphi(.,.,.)\in L^{1}(dt,C(U\times V)),$ (for more
details, see \cite{key-4}, Lemma X.1.3, pp. 485). We denote as
$(HP)$ the hyperrelaxed version of the problem stated in section
$2.$

As it is pointed out in (\cite{key-4}, Remark, pp. 489), there appears
not useful way to define a compact metric topology on $\tilde{\mathcal{P}}$
such that the function $\pi\mapsto\sigma\otimes\pi$ is continuous
for every $\sigma\in\mathcal{S}.$ However, we can overcome this difficult
proceeding as follows:
\begin{enumerate}
\item Restrict our attention to any denumerable subset $\mathcal{S}'\subset\mathcal{S};$
\item Introduce on $\tilde{\mathcal{P}}$ the following relation $\sim:$
$\pi_{1}\sim\pi_{2}$ if and only if
\[
\sigma\otimes\pi_{1}(t)=\sigma\otimes\pi_{2}(t)\qquad a.e.\: t\in[t_{0},t_{1}],\qquad\forall\,\sigma\in\mathrm{co}\,\mathcal{S}'.
\]
We denote as $\mathcal{P}$ the set of equivalence classes
on $\tilde{\mathcal{P}};$
\item We introduce a compact metric topology on $\mathcal{P},$ which makes
continuous the mapping $\pi\mapsto\sigma\otimes\pi$ for every $\sigma\in\mathrm{co}\,\mathcal{S}'.$
By (\cite{key-4}, Lemma X.I.I, pp. 482), for every $\tilde{\sigma}\in\mathrm{co}\,\mathcal{S}'$
there exists a unique nonatomic measure $\tilde{\zeta}$ such that
\[
\intop_{t_{0}}^{t_{1}}dt\intop h(t,u)\tilde{\sigma}(t)(du)=\intop h(t,u)\tilde{\zeta}(d(t,u))\qquad\forall\: h\in L^{1}([t_{0},t_{1}]\times U, \tilde{\zeta}).
\]
$\mathcal{P}$ can be seen as the set of the $\tilde{\zeta}-$measurable
mappings $\pi:[t_{0},t_{1}]\times U\rightarrow\mathrm{r.p.m}.(V)$ such that
$\pi(t,u)(V(t))=1$ $\tilde{\zeta}-$a.e. which elements are also Borel measurable on the set
$([t_{0},t_{1}]\times U)$. (for more details, see (\cite{key-4},
Definition X.2.1, pp. 496) and following discussion).
\end{enumerate}
We now state a lemma that brings the link between the
measure $\tilde{\zeta}$ and the elements $\sigma\in\mathcal{S}'$ to light.
\begin{lem}
\label{lem:X.2.2 Warga}Let $\mathcal{S}',$ $\mathcal{P}$ and $\tilde{\zeta}$
be defined as above. Then $\lim_{i}\sigma\otimes\pi_{i}=\sigma\otimes\pi$
for every $\sigma\in\mathcal{S}'$ if $\lim_{i}\pi_{i}=\pi$ in $\mathcal{P}.$
Furthermore, if $E$ is a $\tilde{\zeta}-$null set, then $\chi_{E}(t,u)=0$
$\sigma(t)-$a.e. r,  a.e. $t\in[t_{0},t_{1}]$, for every $\sigma\in\mathcal{S}'.$
\end{lem}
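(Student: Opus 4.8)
The plan is to prove the two assertions separately, the first being essentially a restatement of the way the metric on $\mathcal{P}$ was introduced and the second being the substantive point. For the convergence statement: by construction (item 3 above, following \cite{key-4}, Definition X.2.1) the metric topology on $\mathcal{P}$ was chosen so that, for every $\sigma\in\mathrm{co}\,\mathcal{S}'$, the map $\pi\mapsto\sigma\otimes\pi$ is continuous from $\mathcal{P}$ into $\mathcal{Q}$ endowed with its weak-$\ast$ topology, i.e. the topology in which $\alpha_i\to\alpha$ iff $\int_{t_0}^{t_1}dt\int\varphi(t,u,v)\,\alpha_i(t)(d(u,v))\to\int_{t_0}^{t_1}dt\int\varphi(t,u,v)\,\alpha(t)(d(u,v))$ for every $\varphi\in L^{1}(dt,C(U\times V))$. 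Since $\mathcal{S}'\subseteq\mathrm{co}\,\mathcal{S}'$, continuity of this map at every $\sigma\in\mathcal{S}'$ holds \emph{a fortiori}, and since $\mathcal{P}$ is metric, this sequential continuity is exactly the claim $\lim_i\pi_i=\pi\Rightarrow\lim_i\sigma\otimes\pi_i=\sigma\otimes\pi$. (The relation $\sim$ was set up so that $\sigma\otimes\pi$ depends only on the class of $\pi$ whenever $\sigma\in\mathrm{co}\,\mathcal{S}'$, so every object above is well defined.)

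For the second assertion, the first thing to do is to pin down which $\tilde\sigma$ the measure $\tilde\zeta$ is attached to. Enumerating $\mathcal{S}'=\{\sigma^{1},\sigma^{2},\dots\}$, we take $\tilde\sigma=\sum_{j\geq1}2^{-j}\sigma^{j}$: pointwise this is a well-defined element of $\mathrm{r.p.m.}(U)$ with $\tilde\sigma(t)(U(t))=1$ a.e., it is Borel measurable in $t$, and it lies in (the closure of) $\mathrm{co}\,\mathcal{S}'$; the point of this choice is that it carries strictly positive weight on every element of $\mathcal{S}'$. For $\sigma\in\mathcal{S}$ let $\zeta^{\sigma}$ be the Borel measure on $[t_{0},t_{1}]\times U$ given by $\zeta^{\sigma}(A)=\int_{t_{0}}^{t_{1}}\sigma(t)(A_{t})\,dt$, where $A_{t}=\{u:(t,u)\in A\}$; measurability of $t\mapsto\sigma(t)(A_{t})$ for Borel $A$ is standard (a monotone-class argument through $C([t_{0},t_{1}]\times U)$), and by \cite{key-4}, Lemma X.1.1 the measure $\zeta^{\tilde\sigma}$ coincides with $\tilde\zeta$ (it is nonatomic since its projection onto $[t_{0},t_{1}]$ is Lebesgue measure, so the uniqueness clause applies). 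Now let $E$ be $\tilde\zeta$-null; enlarging $E$ to a Borel set of the same $\tilde\zeta$-measure we may assume $E$ Borel. By Tonelli and monotone convergence $\zeta^{\tilde\sigma}=\sum_{j\geq1}2^{-j}\zeta^{\sigma^{j}}$, hence $0=\tilde\zeta(E)=\sum_{j\geq1}2^{-j}\zeta^{\sigma^{j}}(E)$ with all terms nonnegative, forcing $\zeta^{\sigma^{j}}(E)=0$ for every $j$, i.e. $\zeta^{\sigma}(E)=0$ for every $\sigma\in\mathcal{S}'$. Finally $0=\zeta^{\sigma}(E)=\int_{t_{0}}^{t_{1}}\big(\int_{U}\chi_{E}(t,u)\,\sigma(t)(du)\big)dt$ has nonnegative integrand, so $\sigma(t)(E_{t})=0$ for a.e. $t$, which is precisely the assertion that $\chi_{E}(\cdot,\cdot)=0$ $\sigma(t)$-a.e. for a.e. $t\in[t_{0},t_{1}]$.

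The main obstacle here is bookkeeping rather than depth. One must make sure that the $\tilde\sigma$ entering \cite{key-4}, Lemma X.1.1 is the one with positive weight on every element of $\mathcal{S}'$ — for an arbitrary $\tilde\sigma\in\mathrm{co}\,\mathcal{S}'$ the domination $\zeta^{\sigma}\ll\tilde\zeta$ can genuinely fail — and one must dispatch the routine measure-theoretic points: measurability of the sections $A_{t}$ and of $t\mapsto\sigma(t)(A_{t})$, the reduction of an arbitrary $\tilde\zeta$-null set to a Borel one, the identity $\zeta^{\tilde\sigma}=\sum_{j}2^{-j}\zeta^{\sigma^{j}}$, and the nonatomicity needed to invoke uniqueness in Lemma X.1.1. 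None of this is hard, but all of it has to be recorded for the two one-line computations above to be legitimate.
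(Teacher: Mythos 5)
Your proof is correct. For the record, the paper does not prove this lemma at all --- it simply cites Warga (\cite{key-4}, Lemma X.2.2), so what you have done is reconstruct the argument behind the citation, and your reconstruction is essentially Warga's own: the first assertion is immediate from the way the topology on $\mathcal{P}$ is introduced (it is built precisely so that $\pi\mapsto\sigma\otimes\pi$ is continuous for $\sigma\in\mathrm{co}\,\mathcal{S}'$, hence for $\sigma\in\mathcal{S}'$), and the second assertion reduces to the disintegration $\zeta^{\sigma}(A)=\int_{t_0}^{t_1}\sigma(t)(A_t)\,dt$ together with the identity $\tilde\zeta=\sum_j 2^{-j}\zeta^{\sigma^j}$, which forces each $\zeta^{\sigma^j}$ to vanish on $\tilde\zeta$-null sets. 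One point in your write-up deserves emphasis because it is where you are actually \emph{more} careful than the paper: the absolute continuity $\zeta^{\sigma}\ll\tilde\zeta$ for all $\sigma\in\mathcal{S}'$ holds only because $\tilde\sigma$ is chosen to put strictly positive weight on every element of $\mathcal{S}'$ (an infinite convex combination, hence in the closed convex hull rather than in $\mathrm{co}\,\mathcal{S}'$ as the paper loosely writes); for a generic finite convex combination the second assertion of the lemma would simply be false, and the paper's follow-up remark that any two such $\zeta$'s are ``equivalent'' is inaccurate as stated. Your handling of the routine measure-theoretic points (Borel hull of the null set, measurability of $t\mapsto\sigma(t)(A_t)$, nonatomicity via the projection onto $[t_0,t_1]$) is all sound.
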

\begin{proof}
See (\cite{key-4}, Lemma X.2.2, pp. 497).
\end{proof}
Using lemma \ref{lem:X.2.2 Warga} and the formal construction of the
hyperrelaxed control set described in points $(1)-(3)$, it is easy
to check that the definition of hyperrelaxed controls does not depend
on the choice of $\tilde{\zeta}.$ Indeed, taking any other $\sigma\in\mathrm{co}\mathcal{S}'$
and the associated measure $\zeta,$ it turns out that $\zeta$ and
$\tilde{\zeta}$ are equivalent and  null sets of $\tilde{\zeta}$
are also null sets of $\zeta$ (and vice versa).

It is proven in (\cite{key-4}, Theorem VI.I.I, pp. 348) that the
functions $\mathcal{S}\ni\sigma\mapsto y(\sigma)$ and $\mathcal{Q}\ni\sigma\otimes\pi\rightarrow\hat{y}(\sigma\otimes\pi)$
are continuous. Furthermore, since (\cite{key-4}, Lemma X.3.3, pp. 504),
the function $\mathcal{P}\ni\pi\mapsto\hat{y}(\sigma\otimes\pi)$
is continuous for every $\sigma\in\mathrm{co}\mathcal{S}',$ and also
the function $\mathcal{S}_{P}\ni\sigma_{P}\mapsto\sigma\otimes\sigma_{P}$
is continuous for every $\sigma\in\mathcal{S}.$\\
To summarize, the new relaxed and hyper-relaxed adverse control problems can be written as follows:

\[
(RP)\,\left\{ \begin{array}{l}
\mathrm{Minimize}_{\sigma\in \mathcal{S}} \,h_{0}(y(\sigma)(t_{1}))\\
\mathrm{over}\quad \sigma \times \sigma_{P}\in \mathcal{S}\times \mathcal{S}_{P},\; \mathrm{s.t.}\\
\dot{y}(t)=f(t,y(t),\sigma(t))\quad\mathrm{a.e.}\: t\in[t_{0},t_{1}]\\
\dot{\tilde{y}}(t)=\tilde{f}(t,\hat{y}(t),\sigma\times\sigma_{P}(t)) \quad \mathrm{a.e.}\: t\in[t_{0},t_{1}]\\
y(t_{0})=b\in B,\quad \tilde{y}(t_{0})=\tilde{b}\in \tilde{B}\quad\mathrm{and}\quad h_{1}(y(\sigma)(t_{1}))=0\\
\hat{h}(\hat{y}(\sigma \times \sigma_{P})(t_{1}))\leq 0 \quad \mathrm{ for\: each}\:\ \sigma_{P}\in \mathcal{S}_{P}
\end{array}\right. ,
\]

and

\[
(HP)\,\left\{ \begin{array}{l}
\mathrm{Minimize}_{\sigma\in \mathcal{S}} \,h_{0}(y(\sigma)(t_{1}))\\
\mathrm{over}\quad \sigma \in \mathcal{S}, \quad \pi \in \mathcal{P},\; \mathrm{s.t.}\\
\dot{y}(t)=f(t,y(t),\sigma(t))\quad\mathrm{a.e.}\: t\in[t_{0},t_{1}]\\
\dot{\tilde{y}}(t)=\tilde{f}(t,\hat{y}(t),\sigma\otimes\pi(t)) \quad \mathrm{a.e.}\: t\in[t_{0},t_{1}]\\
y(t_{0})=b\in B,\quad \tilde{y}(t_{0})=\tilde{b}\in \tilde{B}\quad\mathrm{and}\quad h_{1}(y(\sigma)(t_{1}))=0\\
\hat{h}(\hat{y}(\sigma \otimes \pi)(t_{1}))\leq 0 \quad \mathrm{ for\: each}\:\pi\in \mathcal{P}
\end{array}\right. ,
\]
where, to the sake of shortness,  we have used the notation
$$f(t,y(t),\sigma(t))=\intop f(t,y(\sigma)(t),u)\sigma(t)(du),$$
$$\tilde{f}(t,\hat{y}(t),\sigma\otimes\pi(t))=\intop\tilde{f}(t,\hat{y}(\sigma\otimes\pi)(t),u,v)\sigma(t)(du)\otimes\pi(u,t)(dv)$$
and  
$$\tilde{f}(t,\hat{y}(t),\sigma\times\sigma_{P}(t))=\intop\tilde{f}(t,\hat{y}(\sigma\times\sigma_{P})(t),u,v)\sigma(du)\times\sigma_{P}(dv)(t)$$
\section{A special choice of $\mathcal{S}'$}
\label{sec:Choice Den Set}

Let $\bar{\sigma}\in\mathcal{S}$ be a given relaxed control. We define
a special denumerable set $\mathcal{S}'$ which will be used at a
succeeding stage. By (\cite{key-4} Condition IV.3.I, pp 280), there
exists a denumerable subset $\mathcal{U}_{\infty}$ of $\mathcal{U}$
such that the set $\left\{ u(t)|\, u\in\mathcal{R}_{\infty}\right\} $
is dense in $R(t)$ a.e. $t\in[t_{0},t_{1}].$ We stress that the
compactness of $V$ is a sufficient condition by which such condition
is satisfied. If we denote as $I_{\infty}$ the set of all the subintervals
$[a,b]$ of $[t_{0},t_{1}]$ with rational endpoints, then the set
$\mathcal{U}_{\infty}\times I_{\infty}$ is still denumerable and
takes the form $\{(u^{j},[a^{j},b^{j}]):\, j\in\mathbb{N}\}.$ We denote
as $\delta_{r}$ the Dirac measure at $r;$ we set $\sigma_{0}:=\bar{\sigma}$
and, for all $j\in\mathbb{N},$ 
\[
\sigma_{j}(t):=\left\{ \begin{array}{lll}
\delta_{u^{j}(t)} &  & if\; t\in[a^{j},b^{j}]\\
\bar{\sigma}(t) &  & otherwise
\end{array}\right..
\]
We finally set $\mathcal{S}':=\{\sigma_{0},\sigma_{1},\sigma_{2},\ldots\}.$
This special construction for the denumerable set $\mathcal{S}'$
will be helpful in the proofs of theorems \ref{thm:Hyper-Relaxed}, \ref{thm:Relaxed Prob}.

\section{Relaxed Derivatives}
\label{sec:Relaxed-Derivatives}

Consider an open set $\Omega'\subset\mathbb{R}^{n}$ and a set $\Omega\subset \Omega'$
which has compact closure into $\Omega.$ We use the notation $\Omega\subset\subset \Omega'$
and we can always suppose that there exists an $\varepsilon>0$ small enough such
that $\Omega+\varepsilon \mathbb{B}\subset \Omega'.$ 

We construct a $C^{\infty}$ function as follows: define 
\[
\bar{\varrho}(x):=\begin{cases}
\begin{array}{lll}
\exp\left(\frac{-1}{1-|x|^{2}}\right) &  & if\; x\in \mathbb{B}\\
\\
0 &  & otherwise
\end{array}\end{cases}
\]
and consider its normalization $\varrho(x)=\frac{\bar{\varrho}(x)}{\int_{\mathbb{B}}\bar{\varrho}(x')dx'}.$
It follows that $\varrho(.)$ is $C^{\infty},$ has compact support
in $\mathbb{B}$ and 
\[
\intop_{\mathbb{R}^{n}}\varrho(x)dx=\intop_{\mathbb{B}}\varrho(x)=1.
\]
Furthermore we can define a function $\varrho^{j}(x):=j^{n}\varrho(jx)$
which has compact support in $\frac{1}{j}\mathbb{B}$ and such that $\int\varrho^{j}(x)dx=1$
for each $j\in\mathbb{N}.$ We say that the function $\varrho(.)$
is a $C^{\infty}$ mollifier and that $\{\varrho^{j}\}$ is a sequence
of mollifiers.

Let $\phi:\Omega'\subset\mathbb{R}^{n}\rightarrow\mathbb{R}$ be a locally
Lipschitz function with constant $L_{\phi}$ and take a point $x\in \Omega.$
We next consider the convolution between the sequence of mollifiers
and the function $\phi(.)$ defining
\[
\phi^{j}(x):=(\phi*\varrho^{j})(x):=\intop_{j^{-1}\mathbb{B}}\varrho^{j}(y)\phi(x-y)dy=\intop_{x+j^{-1}\mathbb{B}}\varrho^{j}(x-y)\phi(y)dy.
\]
The last equality is well defined for $j$ sufficiently small because
$x\in \Omega.$ The sequence $\{\phi^{j}(.)\}_{j\in\mathbb{N}}$ is called
Fredholm approximation of the function $\phi(.).$ It turns out that
the functions $\phi^{j}(.)$ are $C^{\infty}$ and their partial
derivatives $\partial\phi^{j}(.)$ are uniformly continuous on $\bar{\Omega}$.
Furthermore, by the Rademacher theorem, the function $\partial_{x}\phi(.)$
exists a.e. and, for $x\in \Omega$, we set
\[
\partial_{x}\phi^{j}(x):=\intop_{j^{-1}\mathbb{B}}\varrho^{j}(y)\partial_{x}\phi(x-y)dy.
\]
If $\eta:[t_{0},t_{1}]\rightarrow \Omega$ is a continuous function, then
$t\mapsto\partial_{x}^{j}\phi(\eta(t))$ is Lebesgue measurable and dominated
by $|\partial_{x}^{j}\phi(\eta(t))|\leq L_{\phi}.$ It follows that the integral
\[
\Phi^{j}:=\intop_{t_{0}}^{t_{1}}\partial_{x}^{j}\phi(\eta(t))dt
\]
is well defined and, by the dominated convergence theorem, there exists $\Phi:=\lim_{j}\Phi^{j}.$
We call $\Phi$ the relaxed derivative of $\phi(.)$ evaluated along
the continuous function $\eta(.).$ 

We have the following result:
\begin{lem}
\label{lem:Convolution_1}Suppose that $\phi(.)$ and $\Omega$ are the
same objects defined in section \ref{sec:Relaxed-Derivatives} and that $\phi^{j}(.)$
are the Fredholm approximations of $\phi(.)$. Then $\phi^{j}(.)$
are locally Lipschitz with constant $L_{\phi}$ and, for all $x\in \Omega,$
\[
|\partial_{x}\phi^{j}(x)|\leq L_{\phi},\qquad|\phi^{j}(x)-\phi(x)|\leq L_{\phi}/j.
\]
\end{lem}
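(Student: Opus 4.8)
The statement collects three standard facts about mollification, and the only genuine care needed is to make sure all the points where we invoke the Lipschitz bound actually lie inside $\Omega'$. So the first step is to fix $j$ large enough that $j^{-1}<\varepsilon$; then for every $x\in\Omega$ the whole ball $x+j^{-1}\mathbb{B}$ is contained in $\Omega+\varepsilon\mathbb{B}\subset\Omega'$, and since $\varrho^{j}$ is supported in $j^{-1}\mathbb{B}$, the integrals defining $\phi^{j}(x)$ and $\partial_{x}\phi^{j}(x)$ only ever see values of $\phi$ (resp.\ of $\partial_{x}\phi$, which exists a.e.\ by Rademacher) at points of $\Omega'$ where the Lipschitz bound $L_{\phi}$ holds. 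From here I would treat the three claims in turn.

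For the Lipschitz bound on $\phi^{j}$, take $x,x'\in\Omega$ and write, using $\int\varrho^{j}=1$,
\[
\phi^{j}(x)-\phi^{j}(x')=\intop_{j^{-1}\mathbb{B}}\varrho^{j}(y)\bigl(\phi(x-y)-\phi(x'-y)\bigr)dy .
\]
Each integrand is bounded in absolute value by $\varrho^{j}(y)\,L_{\phi}|x-x'|$ because $x-y,\,x'-y\in\Omega'$, and integrating against the probability density $\varrho^{j}$ gives $|\phi^{j}(x)-\phi^{j}(x')|\le L_{\phi}|x-x'|$. The bound $|\partial_{x}\phi^{j}(x)|\le L_{\phi}$ then follows immediately from this Lipschitz estimate (alternatively, directly from the representation $\partial_{x}\phi^{j}(x)=\int\varrho^{j}(y)\,\partial_{x}\phi(x-y)\,dy$ together with $|\partial_{x}\phi|\le L_{\phi}$ a.e.\ and $\int\varrho^{j}=1$; differentiation under the integral sign is legitimate since $\varrho^{j}\in C^{\infty}$ has compact support).

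For the uniform approximation estimate, again using $\int\varrho^{j}=1$ write
\[
\phi^{j}(x)-\phi(x)=\intop_{j^{-1}\mathbb{B}}\varrho^{j}(y)\bigl(\phi(x-y)-\phi(x)\bigr)dy .
\]
On the support of $\varrho^{j}$ we have $|y|\le j^{-1}$, and since both $x$ and $x-y$ lie in $\Omega'$ this yields $|\phi(x-y)-\phi(x)|\le L_{\phi}|y|\le L_{\phi}/j$; integrating against $\varrho^{j}$ gives $|\phi^{j}(x)-\phi(x)|\le L_{\phi}/j$.

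\textbf{Main obstacle.} There is no real analytic difficulty here; the only thing to be careful about is bookkeeping with the domains—namely restricting to $j$ with $j^{-1}<\varepsilon$ so that every evaluation point stays in $\Omega'$ where the constant $L_{\phi}$ is valid—and, if one prefers the gradient representation, justifying the interchange of differentiation and integration (routine, since $\varrho^{j}$ is smooth and compactly supported and $\phi$ is continuous).
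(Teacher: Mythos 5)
Your argument is correct and is essentially the same standard mollification computation the paper relies on (the paper simply defers to Brezis, and the argument it has in mind is exactly your two displayed identities together with $\int\varrho^{j}=1$ and the support bound $|y|\le j^{-1}$). Your extra care about restricting to $j^{-1}<\varepsilon$ so that all evaluation points stay in $\Omega'$ is a sensible bit of bookkeeping that the paper leaves implicit.
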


\begin{proof} See Brezis.
\begin{comment}
From the definition of convolution, it follows
\[
\phi^{j}(x_{1})-\phi^{j}(x_{2})=\intop_{j^{-1}\mathbb{B}}\varrho^{j}(y)\left(\phi(x_{1}-y)-\phi(x_{2}-y)\right)dy;
\]
hence 
\[
|\phi^{j}(x_{1})-\phi^{j}(x_{2})|\leq L_{\phi}|x_{1}-x_{2}|.
\]
This implies that $|\partial_{x}\phi^{j}(x)|\leq L_{\phi}$ for every
$j\in\mathbb{N}.$ 

Furthermore, recalling that $\int\varrho^{j}(y)dy=1,$ we can easily
estimate
\[
|\phi^{j}(x)-\phi(x)|=\left|\intop_{j^{-1}\mathbb{B}}\varrho^{j}(y)\left(\phi(x-y)-\phi(x)\right)dy\right|\leq
\]
\[
\leq L_{\phi}\intop_{j^{-1}\mathbb{B}}\varrho^{j}(y)|y|dy\leq j^{-1}L_{\phi}.
\]
\end{comment}
\end{proof}

The analysis carried out in this section can be easily extended to
any function $\phi:[t_{0},t_{1}]\times \Omega \times U\rightarrow\mathbb{R}^{n}$
such that $t\mapsto\phi(t,x,u)$ is integrable for every $(x,u)\in \Omega \times U$,
$x\mapsto\phi(t,x,u)$ is Lipschitz continuous a.e. $t\in[t_{0},t_{1}],$ for
every $u\in U$ and $u\mapsto\phi(t,x,u)$ is continuous a.e. $t\in[t_{0},t_{1}],$
for every $x\in \Omega.$

\section{Preliminary Results: The smooth case}

In this section, we state results dealing with the case in which all the data  of the adverse control problems $(RP)$ and $(HP)$ are smooth. Such lemmas are very similar to \cite{key-4}, X.3.5, X.3.7, but they differs in the typology of adverse control problems that we are dealing with. In what follows, we will invoke the following hypothesis:

\begin{itemize}
\item[$H4)$]   $\hat{f}(t,.,u,v)$, $\hat{h}(.)$ and $h_{i}(.)$, $i=0,1$ are all continuously differentiable, a.e. $t\in [t_{0},t_{1}]$, for all $u\in U$, $v\in V$. 
\end{itemize}
Furthermore, for $t\in [t_{0},t_{1}]$, $\bar{\sigma}\in \mathcal{S}$ and  $\pi \in \mathcal{P}$, we will denote as
\[
Z(t):=I_{n}+\intop_{t}^{t_{1}}Z(s)\partial_{x}f(s,y(\bar{\sigma})(s),\bar{\sigma}(s))ds,
\]
\[
\hat{Z}(\pi)(t)=I_{n+m}+\intop_{t}^{t_{1}}\hat{Z}(\pi)(s)\partial_{x}\hat{f}(s,\hat{y}(\bar{\sigma}\otimes\pi)(s),(\bar{\sigma}\otimes\pi)(s))ds,
\]
which are well posed since the regularity of the dynamics with respect to the state variable.
\begin{lem}\label{smooth_hyper}
Assume hypotheses $H1)$ - $H4)$. Then, given $(\bar{\sigma},\bar{b}, \bar{\tilde{b}})$ minimizer for problem $(HP)$, then there exist $l_{0}\geq 0$, $l_{1}\in \mathbb{R}^{n}$ and $\omega \in \mathrm{frm}^{+}(\mathcal{P})$ such that
\[ 
i) \qquad 0< l_{0}+l_{1} + \omega(\mathcal{P}) \leq 1;
\]
if we set
\[
k(t):=(l_{0}\partial_{x}h_{0}(y(\bar{\sigma})(t_{1})) + l_{1}\partial_{x}h_{1}(y(\bar{\sigma})(t_{1})))Z(t),
\]
\[ \hat{k}(\pi)(t):=\partial_{x} \hat{h}(\hat{y}(\bar{\sigma}\otimes \pi)(t_{1}))\hat{Z}(\pi)(t),
\]
$$ \mathfrak{h}(\pi, t, u)= \max_{v\in V(t)} \hat{k}(\pi)(t)\hat{f}(t,\hat{y}(\bar{\sigma}\otimes\pi)(t), u, v),$$
and
$$H(t,u)=k(t)f(t,y(\bar{\sigma})(t),u) + \int \mathfrak{h}(\pi, t, u) \omega(d\pi),$$
for each $\pi \in \mathcal{P}$, $u\in U(t)$, a.e. $t\in [t_{0}, t_{1}]$, then
\[
ii )\qquad\intop H(t,u)\bar{\sigma}(t)(du)=\min_{u\in U(t)}H(t,u)\quad a.e.\; t\in[t_{0},t_{1}],
\]
\[
iii)\qquad \mathfrak{h}(\pi,t,u)=\intop \hat{k}(\pi)(t)\hat{f}(t,\hat{y}(\bar{\sigma}\otimes\pi)(t),u,v)\pi(t,u)(dv),
\]
for $\omega$-a.a. $\:\pi\in\mathcal{P}$,
\[
iv)\qquad\hat{h}(\hat{y}(\bar{\sigma}\otimes\pi^{*})(t_{1})=\max_{\pi\in\mathcal{P}}\hat{h}(\hat{y}(\bar{\sigma}\otimes\pi)(t_{1}))=0
\]
for $\omega$-a.a. $\:\pi^{*}\in\mathcal{P}$, and
\[
v) \quad k(t_{0})Z(t_{0})\bar{b}+ \lambda(\bar{b},\bar{\tilde{b}})= \min_{(b,\tilde{b})\in \hat{B}}k(t_{0})Z(t_{0})b+ \lambda (b,\tilde{b}),
\]
where $\lambda:=\int \hat{k}(\pi)(t_{0})\omega(d\pi)$.
\end{lem}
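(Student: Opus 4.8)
The plan is to reduce the assertion to a single application of Warga's necessary conditions for a relaxed control problem carrying a functional inequality constraint, in the spirit of (\cite{key-4}, Theorems~X.3.5 and X.3.7), but adapted to the decoupled adverse structure and to the fact that the initial datum $\hat b=(b,\tilde b)$ enters as an additional control parameter. First I would fix the given minimizer $(\bar\sigma,\bar b,\bar{\tilde b})$ and build the denumerable set $\mathcal{S}'$ of Section~\ref{sec:Choice Den Set} around $\bar\sigma$, so that $\sigma_0=\bar\sigma$. By the construction of Section~\ref{sec:Relaxed-and-Hyper-relaxed}, $\mathcal{P}$ is then a compact metric space on which $\pi\mapsto\hat y(\bar\sigma\otimes\pi)(t_1)$ is continuous, so that by $H3)$ the map $G(\pi):=\hat h(\hat y(\bar\sigma\otimes\pi)(t_1))$ belongs to $C(\mathcal{P})$ and the adverse requirement ``$\hat h\leq0$ for every $\pi$'' becomes the statement that $G$ lies in the negative of the cone of nonnegative functions in $C(\mathcal{P})$, a constraint whose natural multiplier is a finite positive Radon measure $\omega\in\mathrm{frm}^{+}(\mathcal{P})$.

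Next I would record the first-order variations supplied by $H4)$. Standard relaxed-control calculus (\cite{key-4}, Chapter~VI) shows that along segments in $\mathrm{co}\,\mathcal{S}'$ the maps $\sigma\mapsto y(\sigma)(t_1)$ and $\sigma\mapsto\hat y(\sigma\otimes\pi)(t_1)$ are one-sidedly differentiable, the derivatives being represented by the resolvent matrices $Z$ and $\hat Z(\pi)$ introduced just before the lemma; the chain rule is unobstructed because $\tilde f(t,\hat y,\sigma\otimes\pi(t))$ is affine in $\sigma$ for $\pi$ fixed. Likewise $b\mapsto y(\bar\sigma)(t_1)$ and $\hat b\mapsto\hat y(\bar\sigma\otimes\pi)(t_1)$ are affine, with derivatives governed by $Z(t_0)$ and $\hat Z(\pi)(t_0)$. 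Composing $h_0$, $h_1$ and $G$ with these linearizations recasts $(HP)$ as the minimization over $(\sigma,\hat b)\in\mathrm{co}\,\mathcal{S}'\times\hat B$ of a scalar cost subject to the equality constraint $h_1=0$ and the $C(\mathcal{P})$-valued inequality constraint $G\leq0$; since $\bar\sigma\in\mathrm{co}\,\mathcal{S}'$ and $\bar{\hat b}$ are optimal for $(HP)$, they are a fortiori optimal for the reduced problem, and Warga's multiplier rule together with the relaxed maximum principle applies.

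That machinery furnishes $l_0\geq0$ for the cost, $l_1$ for the $h_1$-constraint and $\omega\in\mathrm{frm}^{+}(\mathcal{P})$ for the $G$-constraint, not all zero; rescaling yields the normalization $(i)$. Complementary slackness $\int_{\mathcal{P}}\hat h(\hat y(\bar\sigma\otimes\pi)(t_1))\,\omega(d\pi)=0$, together with feasibility $\hat h(\hat y(\bar\sigma\otimes\pi)(t_1))\leq0$ for every $\pi$, forces $\omega$ to be carried by the zero set of $G$, which coincides with the set of maximizers of $\pi\mapsto\hat h(\hat y(\bar\sigma\otimes\pi)(t_1))$; this is $(iv)$. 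The adjoint state splits: propagating the terminal covector $l_0\partial_x h_0+l_1\partial_x h_1$ backward through $Z$ gives $k(t)$, and, for each $\pi$ in the support of $\omega$, propagating $\partial_x\hat h$ backward through $\hat Z(\pi)$ gives $\hat k(\pi)(t)$. The Pontryagin minimum condition for the relaxed control $\bar\sigma$ --- obtained from needle variations of $\bar\sigma$, which are precisely the elements $\sigma_j\in\mathcal{S}'$, and passed from the dense values $\{u^j(t)\}$ to all of $U(t)$ by continuity of $u\mapsto H(t,u)$ --- is $(ii)$, with $H$ as displayed. Finally, $(iv)$ says that for $\omega$-a.a. $\pi^{*}$ the measure $\pi^{*}$ maximizes $\pi\mapsto\hat h(\hat y(\bar\sigma\otimes\pi)(t_1))$ with $\bar\sigma$ frozen; applying the maximum principle to this inner problem (costate $\hat k(\pi^{*})$, terminal data $\partial_x\hat h$) shows that $\pi^{*}(t,u)$ is carried by the set of $v\in V(t)$ maximizing $v\mapsto\hat k(\pi^{*})(t)\hat f(t,\hat y(\bar\sigma\otimes\pi^{*})(t),u,v)$, which is $(iii)$ and makes the two expressions for $\mathfrak{h}(\pi^{*},t,u)$ --- with and without the outer $\max_v$ --- agree. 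Condition $(v)$ is then the first-order optimality over the control parameter $\hat b$ in the convex set $\hat B$: $\bar{\hat b}$ minimizes the linearization of the Lagrangian in $\hat b$, which after inserting $k(t_0)$ and $\lambda=\int\hat k(\pi)(t_0)\,\omega(d\pi)$ is exactly the stated inequality.

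I expect the main difficulty to be the handling of the semi-infinite adverse constraint: one needs a compact metric topology on $\mathcal{P}$ for which $\pi\mapsto\hat y(\bar\sigma\otimes\pi)$ is continuous, so that the constraint genuinely lives in $C(\mathcal{P})$ and produces a bona fide measure multiplier $\omega$, and one must keep all control variations of $\sigma$ inside $\mathrm{co}\,\mathcal{S}'$, since outside it neither that continuity nor Lemma~\ref{lem:X.2.2 Warga} is available --- this is exactly what the tailored construction of $\mathcal{S}'$ in Section~\ref{sec:Choice Den Set} secures. A secondary subtlety is the nested maximum principle behind $(iii)$: one must relate the costate $\hat k(\pi^{*})$ of the frozen-$\bar\sigma$ inner problem to the covector appearing inside the outer Hamiltonian, and check measurability of $(\pi,u)\mapsto\mathfrak{h}(\pi,t,u)$ so that the $\omega$-integral defining $H$ makes sense.
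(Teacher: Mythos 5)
Your proposal is correct and follows essentially the same route as the paper: the paper's proof is a terse specialization of Warga's abstract multiplier rule (\cite{key-4}, X.2.4) together with X.3.2, X.1.4 and the argument of X.3.5, and your reconstruction --- viewing the adverse requirement as a $C(\mathcal{P})$-valued inequality constraint with a positive Radon measure multiplier, using the resolvents $Z$, $\hat{Z}(\pi)$ to linearize, extracting $(i)$ and $(iv)$ from nontriviality and complementary slackness, $(ii)$ and $(v)$ from the relaxed minimum condition over $\mathrm{co}\,\mathcal{S}'$ and $\hat{B}$, and $(iii)$ from the maximum principle for the $\omega$-a.e.\ maximizing $\pi^{*}$ --- is exactly the content of that citation (cf.\ also Comment~1 after Theorem~\ref{thm:Relaxed Prob}, which makes your $C(\mathcal{P})$-duality interpretation explicit). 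No gap.
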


\begin{proof}
We specialize theorem \cite{key-4}, X.2.4 to the data of the problem $(HP)$.
From condition \cite{key-4}, X.2.4 (1), it follows that there exist $l_{0}\geq 0$, $l_{1}\in \mathbb{R}^{n}$, $\omega \in \mathrm{frm}^{+}(\mathcal{P})$ and an $\tilde{\omega}(.)$ function which is $L^{1}(\omega,\mathcal{P})$ and such that $|\tilde{\omega}(\pi)|=1$ for $\omega$-a.a. $\:\pi\in\mathcal{P}$. In particular, from condition \cite{key-4}, X.2.4 (3), we obtain that $\tilde{\omega}\equiv 1$ and that
$$\hat{h}(\hat{y}(\bar{\sigma}\otimes\pi^{*})(t_{1})=0.$$
Conditions $i)$ and $iv)$ are then satisfied.\\
Now applying the result of \cite{key-4}, X.3.2 to the function $(\sigma \otimes \pi) \mapsto \hat{h}(\hat{y}(\sigma \otimes \pi)(t_{1}))$ and combining with \cite{key-4}, X.1.4, we get the relation
$$\mathfrak{h}(\pi,t,u)=\intop\partial_{x} \hat{h}(\hat{y}(\bar{\sigma}\otimes \pi)(t_{1}))\hat{Z}(\pi)(t)\cdot\hat{f}(t,\hat{y}(\bar{\sigma}\otimes\pi)(t),u,v)\pi(t,u)(dv),$$
for $\omega$-a.a. $\:\pi\in\mathcal{P}$, which is condition $iii)$.\\
From \cite{key-4}, X.2.4 (2), arguing as in the proof of \cite{key-4}, X.3.5, relations $ii)$ and $v)$ follow. This completes the proof.
\end{proof}

We now state a similar result for relaxed adverse control problems. We do not perform the proof since it is based on the same arguments of the proof of Lemma \ref{smooth_hyper}. In this case, for given $t\in [t_{0},t_{1}]$, $\bar{\sigma}\in \mathcal{S}$ and  $\bar{\sigma}_{P}\in \mathcal{S}$, the function $Z(.)$ remains unchanged, while we define
\[
\hat{Z}(\sigma_{P})(t)=I_{n+m}+\intop_{t}^{t_{1}}\hat{Z}(\sigma_{P})(s)\partial_{x}\hat{f}(s,\hat{y}(\bar{\sigma}\times\sigma_{P})(s),(\bar{\sigma}\times\sigma_{P})(s))ds.
\]

\begin{lem}\label{smooth_relaxed}
Assume hypotheses $H1)$ - $H4)$. Then, given $(\bar{\sigma},\bar{b})$ minimizer for problem $(RP)$, then there exist $l_{0}\geq 0$, $l_{1}\in \mathbb{R}^{n}$ and $\omega \in \mathrm{frm}^{+}(\mathcal{P})$ such that
\[ 
i) \qquad 0< l_{0}+l_{1} + \omega(\mathcal{P}) \leq 1;
\]
if we set 
\[
k(t):=(l_{0}\partial_{x}h_{0}(y(\bar{\sigma})(t_{1})) + l_{1}\partial_{x}h_{1}(y(\bar{\sigma})(t_{1})))Z(t),
\]
\[ 
\hat{k}(\sigma_{P})(t):=\partial_{x}\hat{h}(\hat{y}(\bar{\sigma}\times \sigma_{P})(t_{1}))\hat{Z}(\sigma_{P})(t)
\]
\[
\mathfrak{h}(\sigma_{P},t,s):=\max_{v\in V(t)}\hat{k}(\sigma_{P})(t)\cdot\intop\hat{f}(t,\hat{y}(\bar{\sigma}\times\sigma_{P})(t),u,v)s(du),
\]
and
$$H(t,s)=k(t)f(t,y(\bar{\sigma})(t),s) + \int \mathfrak{h}(\sigma_{P}, t, s) \omega(d\sigma_{P}),$$
then
\[
ii )\qquad\intop H(t,u)\bar{\sigma}(t)(du)=\min_{u\in U(t)}H(t,u)\quad a.e.\; t\in[t_{0},t_{1}],
\]
\[
iii)\qquad \mathfrak{h}(\sigma_{P},t,\bar{\sigma}(t))=\hat{k}(\sigma_{P})(t)\intop\hat{f}(t,\hat{y}(\bar{\sigma}\times\sigma_{P})(t),\bar{\sigma}(t),v)\sigma_{P}(dv),
\]
for $\omega$-a.a. $\:\sigma_{P}\in\mathcal{S}_{P}$, a.e. $t\in [t_{0},t_{1}]$,
\[
iv)\qquad\hat{h}(\hat{y}(\bar{\sigma}\times\sigma_{P}^{*})(t_{1}))=\max_{\sigma\in\mathcal{S_{P}}}\hat{h}(\hat{y}(\bar{\sigma}\times\sigma_{P})(t_{1}))=0
\]
for $\omega$-a.a. $\:\sigma_{P}^{*}\in\mathcal{S}_{P}$, and
\[
v) \quad k(t_{0})\bar{b}+\lambda (\bar{b}, \bar{\tilde{b}})= \min_{(b,\tilde{b}\in \hat{B}}k(t_{0})b+\lambda(b,\tilde{b}) 
\]
where $\lambda:=\int \hat{k}(\sigma_{P})(t_{0})\omega(d\sigma_{P})$.

\end{lem}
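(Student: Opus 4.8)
The plan is to transcribe, essentially line by line, the proof of Lemma~\ref{smooth_hyper}, replacing the hyperrelaxed data by the relaxed ones, the set $\mathcal{P}$ of hyperrelaxed controls by the set $\mathcal{S}_{P}$ of relaxed adverse controls, the coupling $\sigma\otimes\pi$ by the product coupling $\sigma\times\sigma_{P}$, and the matrix $\hat{Z}(\pi)$ by $\hat{Z}(\sigma_{P})$ (the matrix $Z$ being unchanged, as noted just before the statement). First I would observe that, under $H1)$--$H4)$ together with the continuity statements collected in Section~\ref{sec:Relaxed-and-Hyper-relaxed} --- in particular the compactness of $\mathcal{S}_{P}$ in its weak-$*$ topology, the continuity of $\sigma_{P}\mapsto\bar{\sigma}\times\sigma_{P}$ for every $\sigma\in\mathcal{S}$, and the continuity of $\sigma\mapsto\hat{y}(\sigma\times\sigma_{P})$ --- the relaxed adverse control problem $(RP)$ is covered by the necessary conditions of \cite{key-4} for relaxed adverse control problems (the relaxed counterpart of the theorem \cite{key-4}, X.2.4 invoked in the proof of Lemma~\ref{smooth_hyper}; cf.\ also \cite{key-4}, X.3.7). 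Specializing that theorem to the data of $(RP)$ at the minimizer $(\bar{\sigma},\bar{b},\bar{\tilde{b}})$ produces multipliers $l_{0}\geq 0$, $l_{1}\in\mathbb{R}^{n}$, a finite positive Radon measure $\omega$ on $\mathcal{S}_{P}$, and a function $\tilde{\omega}\in L^{1}(\omega,\mathcal{S}_{P})$ with $|\tilde{\omega}(\sigma_{P})|=1$ for $\omega$-a.a.\ $\sigma_{P}$.

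From the nontriviality/complementary-slackness part of that theorem I would read off, exactly as in the hyperrelaxed case, that $\tilde{\omega}\equiv 1$, that $0<l_{0}+l_{1}+\omega(\mathcal{S}_{P})\leq 1$, and that $\omega$ is carried by the ``worst-case'' adverse controls $\sigma_{P}^{*}$ for which $\hat{h}(\hat{y}(\bar{\sigma}\times\sigma_{P}^{*})(t_{1}))=\max_{\sigma_{P}\in\mathcal{S}_{P}}\hat{h}(\hat{y}(\bar{\sigma}\times\sigma_{P})(t_{1}))=0$. This already gives $i)$ and $iv)$.

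For the Hamiltonian and adjoint representations I would use the smoothness hypothesis $H4)$: the first-order sensitivity of $y(\bar{\sigma})(t_{1})$ with respect to a perturbation of $\bar{\sigma}$ within $\mathcal{S}$ is propagated by the linear variational equation whose fundamental matrix is $Z(\cdot)$, while that of $\hat{y}(\bar{\sigma}\times\sigma_{P})(t_{1})$ is propagated by $\hat{Z}(\sigma_{P})(\cdot)$; composing with $\partial_{x}h_{0}$, $\partial_{x}h_{1}$ evaluated at $y(\bar{\sigma})(t_{1})$ and with $\partial_{x}\hat{h}$ evaluated at $\hat{y}(\bar{\sigma}\times\sigma_{P})(t_{1})$ then identifies $k(\cdot)$ and $\hat{k}(\sigma_{P})(\cdot)$ as the corresponding adjoint arcs. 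Applying \cite{key-4}, X.3.2 to the map $\sigma\mapsto\hat{h}(\hat{y}(\sigma\times\sigma_{P})(t_{1}))$ with $\sigma_{P}$ frozen --- and here no disintegration is needed, since $\sigma\times\sigma_{P}$ is a genuine product measure, so the role of \cite{key-4}, X.1.4 in the hyperrelaxed proof is automatic --- yields the inner representation $iii)$ of $\mathfrak{h}(\sigma_{P},t,\bar{\sigma}(t))$, i.e.\ that $\sigma_{P}(t)$ is carried by the maximizers of $v\mapsto\hat{k}(\sigma_{P})(t)\cdot\intop\hat{f}(t,\hat{y}(\bar{\sigma}\times\sigma_{P})(t),u,v)\bar{\sigma}(t)(du)$. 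Finally, reasoning as in the derivation of $ii)$ and $v)$ in \cite{key-4}, X.3.7 --- integrating the pointwise conditions coming from the analogue of \cite{key-4}, X.2.4~(2) against $\omega(d\sigma_{P})$ and using $\tilde{\omega}\equiv 1$ --- one obtains the minimum principle $ii)$ and the transversality/initial-data optimality $v)$, with $\lambda=\intop\hat{k}(\sigma_{P})(t_{0})\,\omega(d\sigma_{P})$.

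The only point that genuinely needs to be checked --- and where one must make sure that the relaxed setting does not introduce any loss relative to the hyperrelaxed one --- is that $(RP)$ really meets the hypotheses of Warga's adverse-control necessary conditions: one needs $\mathcal{S}_{P}$ to be compact metric with $\sigma_{P}\mapsto\bar{\sigma}\times\sigma_{P}$ continuous for every $\sigma$, and the map $\sigma\mapsto\hat{y}(\sigma\times\sigma_{P})(t_{1})$ to be differentiable uniformly enough in $\sigma_{P}$ for the integration against $\omega$ in $ii)$ and $v)$ to be legitimate. In the relaxed case this is in fact more transparent than in the hyperrelaxed one, because $\mathcal{S}_{P}$ already carries the natural weak-$*$ topology and no auxiliary denumerable family $\mathcal{S}'$ nor the associated measure $\tilde{\zeta}$ is needed; it is precisely the bilinear $\sigma\times\sigma_{P}$ structure of the $\tilde{f}$-term that makes the two variational equations split in the way encoded by $Z$ and $\hat{Z}(\sigma_{P})$. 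Everything else is a routine transcription of the proof of Lemma~\ref{smooth_hyper}.
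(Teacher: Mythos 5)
Your proposal is correct and is essentially the proof the paper intends: the paper explicitly omits the argument, stating that it follows by the same reasoning as Lemma \ref{smooth_hyper} with $\mathcal{P}$, $\sigma\otimes\pi$, $\hat{Z}(\pi)$ replaced by $\mathcal{S}_{P}$, $\sigma\times\sigma_{P}$, $\hat{Z}(\sigma_{P})$, which is exactly the transcription you carry out (including the correct observation that the product structure makes the disintegration step via \cite{key-4}, X.1.4 and the auxiliary objects $\mathcal{S}'$, $\tilde{\zeta}$ unnecessary).
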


\section{Perturbed Problems}
\label{sec:Perturbed-Problem}

Fredholm approximations can be used to define a sequence of problems
whose limit approximates the behavior of $(OP)$. If we consider
the function $\hat{f}(t,y,u,v),$ we can construct its Fredholm
approximation with respect to $y$ as 
\[
\hat{f}^{j}(t,\hat{y},u,v)=\intop_{j^{-1}\mathbb{B}}\varrho^{j}(x)\hat{f}(t,\hat{y}-x,u,v)dx.
\]
The same procedure can be carried out on the functions
\[
h_{i}^{j}(y)=\intop_{j^{-1}\mathbb{B}}\varrho^{j}(x)h_{i}(y-x)dx,\qquad i=0,1,
\]
and 
\[
\hat{h}^{j}(\hat{y})=\intop_{j^{-1}\mathbb{B}}\varrho^{j}(x)\hat{h}(\hat{y}-x)dx.
\]

The next properties is helpful for the pursuance of the discussion:
\begin{lem}
\label{lem:Properties_Perturbed_Prob}Let
\[
\alpha:=\int_{t_{0}}^{t_{1}}\chi(\tau)d\tau,\quad c_{\hat{y}}:=L_{\hat{f}}+\alpha e^{\alpha},\quad c_{h_{i}}:=L_{h_{i}}(c_{y}+1)\;\forall\, i=0,1,\quad c_{\hat{h}}:=L_{\hat{h}}(c_{y}+1)
\]
Then, for each $\sigma\in\mathcal{S}$ and $\pi\in\mathcal{P}$ (or
$\pi\in\mathcal{S}_{P}$), we have:
\begin{itemize}

\item[$(i)$] $\;$ $w^{j}(t):=|\hat{y}^{j}(\sigma\otimes\pi)(t)-\hat{y}(\sigma\otimes\pi)(t)|\leq c_{\hat{y}}/j;$

\item[$(ii)$] $\;$ $|\hat{f}^{j}(t,\hat{y}^{j}(\sigma\otimes\pi)(t),(\sigma\otimes\pi)(t))-\hat{f}(t,\hat{y}(\sigma\otimes\pi)(t),(\sigma\otimes\pi)(t))|\leq(c_{\hat{y}}+1)/j;$

\item[$(iii)$] $\;$ $|h_{i}^{j}(y^{j}(\sigma)(t))-h_{i}(y(\sigma)(t))|\leq c_{h_{i}}/j\qquad\forall\, i=0,1;$

\item[$(iv)$] $\;$ $|\hat{h}^{j}(\hat{y}^{j}(\sigma\otimes\pi)(t))-\hat{h}(\hat{y}(\sigma\otimes\pi)(t))|\leq c_{\hat{h}}/j.$

\end{itemize}
for every $t\in[t_{0},t_{1}]$.
\end{lem}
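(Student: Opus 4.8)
The plan is to prove $(i)$ first, since $(ii)$--$(iv)$ all follow from it by routine Lipschitz estimates together with Lemma \ref{lem:Convolution_1}. For $(i)$, I would write the difference of the two trajectories as an integral. By definition,
\[
\hat{y}^{j}(\sigma\otimes\pi)(t)-\hat{y}(\sigma\otimes\pi)(t)=\intop_{t_{0}}^{t}\!\!\Big(\hat{f}^{j}(s,\hat{y}^{j}(s),(\sigma\otimes\pi)(s))-\hat{f}(s,\hat{y}(s),(\sigma\otimes\pi)(s))\Big)\,ds,
\]
where I abbreviate $\hat{y}^{j}(s):=\hat{y}^{j}(\sigma\otimes\pi)(s)$ and similarly for $\hat{y}(s)$. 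Inserting and subtracting $\hat{f}^{j}(s,\hat{y}(s),(\sigma\otimes\pi)(s))$, I split the integrand into two terms: one estimated by Lemma \ref{lem:Convolution_1} applied to $\hat{f}(s,\cdot,u,v)$ uniformly in $(u,v)$, giving $|\hat{f}^{j}(s,\hat{y}(s),\cdot)-\hat{f}(s,\hat{y}(s),\cdot)|\le L_{\hat{f}}/j\le 1/j$ (recall $L_{\hat{f}}\le1$ after the time reparametrization), integrated against the probability measure $(\sigma\otimes\pi)(s)$; the other controlled by the Lipschitz bound $H2)$, namely $|\hat{f}^{j}(s,\hat{y}^{j}(s),\cdot)-\hat{f}^{j}(s,\hat{y}(s),\cdot)|\le L_{\hat{f}}|\hat{y}^{j}(s)-\hat{y}(s)|=L_{\hat{f}}\,w^{j}(s)$, using that the Fredholm approximation inherits the same Lipschitz constant (Lemma \ref{lem:Convolution_1}). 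Here I also use $H2)$ to bound $|\hat{f}|\le\chi(\cdot)$ so the trajectories stay in $\hat\Omega$ and the mollified objects are well defined.

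This yields the integral inequality $w^{j}(t)\le (t_{1}-t_{0})/j+\intop_{t_{0}}^{t}\psi(s)\,w^{j}(s)\,ds$, and Gronwall's inequality then gives $w^{j}(t)\le (t_{1}-t_{0})\,e^{\alpha}/j$. To match the stated constant $c_{\hat y}=L_{\hat f}+\alpha e^{\alpha}$ one should carry the bound in terms of $\alpha=\int_{t_0}^{t_1}\chi$ rather than $t_1-t_0$: using $|\hat f^j-\hat f|\le\chi(s)/j$-type perturbation control where applicable, the first term contributes an $L_{\hat f}/j$ piece from the endpoint mismatch and the Gronwall factor supplies the $\alpha e^{\alpha}/j$ piece; the precise bookkeeping is routine once the splitting is set up. The same argument, with $\pi\in\mathcal S_P$ and $\sigma\times\sigma_P$ in place of $\sigma\otimes\pi$, handles the relaxed case verbatim.

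Given $(i)$, the remaining items are immediate. For $(ii)$, insert and subtract $\hat f^{j}(t,\hat y(\sigma\otimes\pi)(t),(\sigma\otimes\pi)(t))$: the first difference is bounded by $L_{\hat f}\,w^{j}(t)\le c_{\hat y}/j$ and the second by $L_{\hat f}/j\le 1/j$ from Lemma \ref{lem:Convolution_1}, summing to $(c_{\hat y}+1)/j$. For $(iii)$ and $(iv)$, write $|h_i^{j}(y^{j}(t))-h_i(y(t))|\le|h_i^{j}(y^{j}(t))-h_i(y^{j}(t))|+|h_i(y^{j}(t))-h_i(y(t))|\le L_{h_i}/j+L_{h_i}\,w^{j}(t)\le L_{h_i}(c_y+1)/j=c_{h_i}/j$, using Lemma \ref{lem:Convolution_1} for the first term and $H3)$ plus $(i)$ for the second; the bound for $\hat h$ is identical with $L_{\hat h}$. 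The main (and only real) obstacle is $(i)$: getting the Gronwall estimate to land on exactly the constant $c_{\hat y}$ claimed, which is a matter of choosing the right quantity to track in the recursion rather than any conceptual difficulty.
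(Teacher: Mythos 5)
Your proposal is correct and follows essentially the same route as the paper: add and subtract a cross term in the integrand, bound the mollification error by Lemma \ref{lem:Convolution_1} and the trajectory discrepancy by the Lipschitz hypothesis, close with Gronwall, and deduce $(ii)$--$(iv)$ from $(i)$ by the triangle inequality. The only (immaterial) difference is that you insert $\hat f^{j}$ evaluated at the unperturbed trajectory while the paper inserts $\hat f$ evaluated at the perturbed one, and both you and the paper leave the exact matching of the constant $c_{\hat y}$ as routine bookkeeping.
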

\begin{proof}
To prove relation $(i),$ we fix $\sigma\in\mathcal{S}$ and $\pi\in\mathcal{P}$
(or $\pi\in\mathcal{S}_{P}$) and from the definition of $\hat{y}^{j}(\sigma\otimes\pi)(.),$
$\hat{y}(\sigma\otimes\pi)(.)$ respectively, it follows
\[
w^{j}(t):=|\hat{y}^{j}(\sigma\otimes\pi)(t)-\hat{y}(\sigma\otimes\pi)(t)|\leq
\]
\[
\leq\intop[|\hat{f}^{j}(s,\hat{y}^{j}(\sigma\otimes\pi)(s),\left(\sigma\otimes\pi\right)(s))-\hat{f}(s,\hat{y}^{j}(\sigma\otimes\pi)(s),\left(\sigma\otimes\pi\right)(s))|+
\]
\begin{equation}
+|\hat{f}(s,\hat{y}^{j}(\sigma\otimes\pi)(s),\left(\sigma\otimes\pi\right)(s))-\hat{f}(s,\hat{y}(\sigma\otimes\pi)(s),\left(\sigma\otimes\pi\right)(s))|]ds.\label{eq:Integral_Conv_2}
\end{equation}
By lemma \ref{lem:Convolution_1}, the first term of the integrand
is bounded by $j^{-1}L_{\hat{f}},$ while, in view of the Lipschitz
continuity of $\hat{f}(t,.,u,v),$ the second term of the integrand
is bounded by $L_{\hat{f}}w^{j}(t).$ From the Gronwall inequality,
relation $(i)$ follows. 

The proof of relations $(ii)-(iv)$ is consequence of relation $(i)$
and of the Lipschitz continuity of the functions $\hat{f}(t,.,u,v),$
$h_{0}(.),$ $h_{1}(.)$ and $\hat{h}(.).$ 
\end{proof}
It follows from lemma \ref{lem:Properties_Perturbed_Prob} that the
sequences $\{\hat{y}^{j}(\sigma\otimes\pi)(.)\}_{j\in J}$ and \\$\left\{ \hat{f}^{j}(t,\hat{y}^{j}(\sigma\otimes\pi)(.),(\sigma\otimes\pi)(.))\right\} _{j\in J}$
converge uniformly with respect to $\sigma\otimes\pi\in\mathcal{Q},$
a.e. $t\in[t_{0},t_{1}].$

In the following, we define problems that will be helpful in the proofs
of theorems \ref{thm:Hyper-Relaxed} and \ref{thm:Relaxed Prob}.

Suppose that $(RP)$ has a solution $(\bar{\sigma},\bar{b},\bar{b}_{P})\in\mathcal{S}\times B\times B_{P}.$
Then we consider the problem of seeking the optimal strategy $\sigma^{j}\in\mathcal{S}$
which minimizes the cost $h_{0}^{j}(y(\sigma)(t_{1}))$ and such that
\[
H_{1}^{j}(y(\sigma)(t_{1}))=0,\qquad\hat{H}^{j}(\hat{y}(\sigma\times\sigma_{P})(t_{1}))\leq0,\quad\forall\,\sigma_{P}\in\mathcal{S}_{P},
\]
where $y(\sigma)(.)$ and $\hat{y}(\sigma\times\sigma_{P})(.)$ are the
solutions of the equations
\begin{equation}
y(\sigma)(t)=\bar{b}+\intop_{t_{0}}^{t}ds\intop f^{j}(s,y(\sigma)(s),u)\sigma(s)(du),\label{eq:Pert_Eq_1}
\end{equation}
\begin{equation}
\hat{y}(\sigma\times\sigma_{P})(t)=(\bar{b},\bar{b}_{P})+\intop_{t_{0}}^{t}ds\intop\hat{f}^{j}(s,\hat{y}(\sigma\times\sigma_{P})(s),u,v)\sigma(s)(du)\times\sigma_{P}(s)(dv).\label{eq:Perturb_Eq_2}
\end{equation}
We define the functions 
\[
H_{1}^{j}(y^{j}(\sigma)(t_{1})):=h_{1}^{j}(y^{j}(\sigma)(t_{1}))-a^{j},
\]
for some suitable choice of $a^{j}\in\frac{c_{h_{1}}}{j}\mathbb{B},$ and 
\[
\hat{H}^{j}(\hat{y}^{j}(\sigma\times\sigma_{P})(t_{1})):=\hat{h}^{j}(\hat{y}^{j}(\sigma\times\sigma_{P})(t_{1}))+\frac{c_{\hat{h}}}{j}.
\]

At the light of the previous discussion, we denote as $(RP^{j})$ the following problem
\[
(RP^{j})\,\left\{ \begin{array}{l}
\mathrm{Minimize}_{\sigma\in \mathcal{S}} \,h_{0}^{j}(y(\sigma)(t_{1}))\\
\mathrm{over}\quad \sigma \times \sigma_{P}\in \mathcal{S}\times \mathcal{S}_{P},\; \mathrm{s.t.}\\
\dot{y}(t)=f^{j}(t,y(t),\sigma(t))\quad\mathrm{a.e.}\: t\in[t_{0},t_{1}]\\
\dot{\tilde{y}}(t)=\tilde{f}^{j}(t,\hat{y}(t),\sigma\times\sigma_{P}(t)) \quad \mathrm{a.e.}\: t\in[t_{0},t_{1}]\\
y(t_{0})=\bar{b},\quad \tilde{y}(t_{0})=\bar{\tilde{b}}\quad\mathrm{and}\quad H_{1}^{j}(y(\sigma)(t_{1}))=0\\
\hat{H}^{j}(\hat{y}(\sigma \times \sigma_{P})(t_{1}))\leq 0 \quad \mathrm{ for\: each}\:\ \sigma_{P}\in \mathcal{S}_{P}
\end{array}\right. ,
\]

If $(\bar{\sigma},\bar{b},\bar{b}_{P})$ is a solution for the problem
$(RP)$ stated in section $2,$ it is easy to check that, using lemma
\ref{lem:Properties_Perturbed_Prob}, we can choose the parameter
$a^{j}$ in such manner that $\bar{\sigma}$ is also an admissible
strategy for the perturbed problem $(RP^{j}),$ for every $j$ sufficiently
large. From general compactness arguments (see \cite{key-4}, Theorem
IX.1.1, pp 445), it follows that there exists a minimizing control
$\sigma^{j}\in\mathcal{S}$ that solves the problem $(RP^{j}).$ 

The same procedure can be carried out when the second player chooses
control strategies in $\mathcal{P}.$ In this case, equation (\ref{eq:Pert_Eq_1})
is not modified, while equation (\ref{eq:Perturb_Eq_2}) becomes
\begin{equation}
\hat{y}(\sigma\otimes\pi)(t)=(\bar{b},\bar{b}_{P})+\intop_{t_{0}}^{t}ds\intop\sigma(s)(du)\intop\hat{f}^{j}(s,\hat{y}(\sigma\otimes\pi)(s),u,v)\pi(s,u)(dv).\label{eq:Perturb_Hyper-Rel_eq}
\end{equation}
The functions $h_{0}^{j}(y(\sigma)(t_{1}))$, $H_{1}^{j}(y(\sigma)(t_{1}))$
and $\hat{H}(\hat{y}(\sigma\otimes\pi)(t_{1}))$ remain unchanged (we have just replaced $\sigma\times\sigma_{P}$ with $\sigma\otimes\pi$).\\
We define the problem of finding a control $\sigma\in\mathcal{S}$
which minimizes $h^{j}_{0}(y(\sigma)(t_{1})),$ such that $y(\sigma)(.)$
and $\hat{y}(\sigma\otimes\pi)(.)$ are solutions of (\ref{eq:Pert_Eq_1}),
(\ref{eq:Perturb_Hyper-Rel_eq}) and 
\[
H_{1}^{j}(y(\sigma)(t_{1}))=0,\qquad\hat{H}^{j}(\hat{y}(\sigma\otimes\pi)(t_{1}))\leq0\quad\forall\,\pi\in\mathcal{P}.
\]
We denote as $(HP^{j})$ the hyperrelaxed perturbed problem 
\[
(HP^{j})\,\left\{ \begin{array}{l}
\mathrm{Minimize}_{\sigma\in \mathcal{S}} \,h_{0}^{j}(y(\sigma)(t_{1}))\\
\mathrm{over}\quad \sigma \in \mathcal{S}, \quad \pi \in \mathcal{P},\; \mathrm{s.t.}\\
\dot{y}(t)=f^{j}(t,y(t),\sigma(t))\quad\mathrm{a.e.}\: t\in[t_{0},t_{1}]\\
\dot{\tilde{y}}(t)=\tilde{f}^{j}(t,\hat{y}(t),\sigma\otimes\pi(t)) \quad \mathrm{a.e.}\: t\in[t_{0},t_{1}]\\
y(t_{0})=\bar{b},\quad \tilde{y}(t_{0})=\bar{\tilde{b}}\quad\mathrm{and}\quad H^{j}_{1}(y(\sigma)(t_{1}))=0\\
\hat{H}^{j}(\hat{y}(\sigma \otimes \pi)(t_{1}))\leq 0 \quad \mathrm{ for\: each}\:\pi\in \mathcal{P}
\end{array}\right. ,
\]
and as $y^{j}(\sigma)(.)$ and $\hat{y}^{j}(\sigma\otimes\pi)(.)$
the solutions of (\ref{eq:Pert_Eq_1}), (\ref{eq:Perturb_Hyper-Rel_eq}) respectively.

The following remark is helpful in the proof of theorem \ref{thm:Hyper-Relaxed}.
Suppose we deal with the set of hyperrelaxed controls $\mathcal{P}$
with the particular choice of $\mathcal{S}'$ in section \ref{sec:Choice Den Set}
and assume that $(\bar{\sigma},\bar{b},\bar{b}_{P})$ is a solution
for the problem $(HP).$ From the particular
choice of $\mathcal{S}',$ it follows that $\bar{\sigma}$ is also
an admissible strategy for $(HP^{j})$ when we restrict our attention
to controls in $\mathrm{co}\mathcal{S}'$ for the first player. Furthermore,
since $\mathrm{co}\mathcal{S}'$ has the same properties of $\mathcal{S}$
(which means $\mathrm{co}\mathcal{S}'$ is convex and sequentially
compact), it follows that $(HP^{j})$ has also a solution $\sigma^{j}\in\mathrm{co}\mathcal{S}'$
for every $j$ (again, see \cite{key-4}, Theorem IX.1.1, pp 445).

\section{Main Theorems}

In the statement and the proof of theorem \ref{thm:Hyper-Relaxed},
we use the following notation. We denote as $y^{j}(\sigma)(.)$
the unique solution of equation (\ref{eq:Pert_Eq_1}) for $\sigma\in\mathrm{co}\mathcal{S}'$
and as $y^{j}(\sigma\otimes\pi)(.)$ the unique solution of (\ref{eq:Perturb_Hyper-Rel_eq})
for $\sigma\in\mathrm{co}\mathcal{S}'$ and $\pi\in\mathcal{P}$ (we
suppose that $\hat{\bar{b}}:=(\bar{b},\bar{b}_{P})$ is fixed for the perturbed problem).
From the discussion in section \ref{sec:Perturbed-Problem}, it follows that the problem
$(HP^{j})$ has a solution $\sigma^{j}\in\mathrm{co}\mathcal{S}'$
for every $j.$ We define the adjoint backward equations
\[
Z^{j}(t):=I_{n}+\intop_{t}^{t_{1}}Z^{j}(s)\partial_{x}f^{j}(s,y^{j}(\sigma^{j})(s),\sigma^{j}(s))ds,
\]
\[
\hat{Z}^{j}(\pi)(t)=I_{n+m}+\intop_{t}^{t_{1}}\hat{Z}^{j}(\pi)(s)\partial_{x}\hat{f}^{j}(s,\hat{y}^{j}(\sigma^{j}\otimes\pi)(s),(\sigma^{j}\otimes\pi)(s))ds.
\]
(To the sake of shortness, we have used the notation 
\[
\varphi(a,\sigma(t)):=\intop\varphi(a,u)\sigma(t)(du)
\] 
and
\[
\phi(a,\sigma\otimes \pi(t)):=\intop \sigma(t) (du)\intop\phi(a,u,v)\pi(t,u)(dv)
\] 
$a\in A$, where $A$ is a given set  and $\varphi:A\times R \rightarrow \mathbb{R}$, $\phi:A\times R \times v\rightarrow \mathbb{R}$ are given functions).
Since the function $x\mapsto\hat{f}^{j}(t,x,u,v)$ is $C^{1}$
for every $j,$ the integrals above are well defined.

In the convergence analysis of theorem \ref{thm:Hyper-Relaxed},  we deal with the  derivatives of the functions  $h_{1}^{j}(.)$
and $\hat{h}^{j}(.)$ instead of considering the derivatives of the functions $H_{1}^{j}(.)$
and $\hat{H}^{j}(.).$ It is easy to check that this simplification
does not affect the statements $i)-v)$ of the following theorem.

\begin{thm}
\label{thm:Hyper-Relaxed}Let $(\bar{\sigma},\bar{b},\bar{b}_{P})$
be an optimal solution to the problem $(HP)$. Then there exist a
set of index $J\subset\mathbb{N}$ , limiting multipliers $l_{0}\geq0,$
$l_{1}\in\mathbb{R},$ limiting initial directions $\mathcal{H}_{0},\mathcal{H}_{1}\in\mathbb{R}^{n},$
a $\omega\in\mathrm{f.r.m.^{+}}(\mathcal{P})$, a $\omega-$integrable
function $\hat{\mathcal{H}}:\mathcal{P}\rightarrow\mathbb{R}^{n+m},$
and, for each $\pi\in\mathcal{P},$ continuous functions $Z:[t_{0},t_{1}]\rightarrow M_{n\times n},$
$\hat{Z}(\pi):[t_{0},t_{1}]\rightarrow M_{\left(n+m\right)\times(n+m)},$
such that:

\[
i)\quad Z(t)=\lim_{j\in J}Z^{j}(t),\qquad\hat{Z}(\pi)(t)=\lim_{j\in J}\;\hat{Z}^{j}(\pi)(t)\quad \mathrm{uniformly} \;  t\in[t_{0},t_{1}], \; \pi\in\mathcal{P};
\]

\[
(l_{0},l_{1})=\lim_{j\in J}(l_{0}^{j},l_{1}^{j}),\quad\mathcal{H}_{0}=\lim_{j\in J}\partial_{x}h_{0}^{j}(y^{j}(t_{1})),\quad\mathcal{H}_{1}=\lim_{j\in J}\partial_{x}h_{1}^{j}(y^{j}(t_{1})),
\]
\[
\hat{\mathcal{H}}(\pi)=\lim_{j\in J}\partial_{x}\hat{h}^{j}(\hat{y}(\sigma^{j}\otimes\pi)(t_{1})),\qquad\omega-a.a.\quad\pi\in\mathcal{P};
\]

\[
ii)\qquad\quad0<l_{0}+\left|l_{1}\right|+\omega(\mathcal{P})\leq1
\]

Define:
\[
k(t):=\left(l_{0}\mathcal{H}_{0}+l_{1}\mathcal{H}_{1}\right)Z(t),\quad\hat{k}(\pi)(t):=\hat{\mathcal{H}}(\pi)\hat{Z}(\pi)(t),
\]
\[
\mathfrak{h}(\pi,t,u):=\max_{v\in v(t)}\hat{k}(\pi)(t)\cdot\hat{f}(t,\hat{y}(\bar{\sigma}\otimes\pi)(t),u,v),
\]

\[
H(t,u):=k(t)f(t,y(\bar{\sigma})(t),u)+\intop\mathfrak{h}(\pi,t,u)\omega(d\pi),
\]
%\[
%\lambda:=\intop\hat{k}(\pi)(t_{0})\omega(d\pi).
%\]
Then
\[
iii)\qquad\intop_{t_{0}}^{t_{1}}dt\intop H(t,u)(\sigma-\bar{\sigma})(t)(du)\geq0\qquad\forall\sigma\in\mathcal{S}',
\]

\[
iv)\qquad\hat{h}(\hat{y}(\bar{\sigma}\otimes\pi^{*})(t_{1})=\max_{\pi\in\mathcal{P}}\hat{h}(\hat{y}(\bar{\sigma}\otimes\pi)(t_{1}))\quad\omega-a.a.\:\pi^{*}\in\mathcal{P}.
\]
%\[
%v)\qquad k(t_{0})\bar{b}+\lambda(\bar{b},\bar{b}_{P})=\min_{a\in B,\tilde{a}\in B_{P}}\left[k(t_{0})a+\lambda\cdot(a,\tilde{a})\right].
%\]
Furthermore, since the choice of $\mathcal{S}'$ as in section \ref{sec:Choice Den Set},
condition $iii)$ can be strengthened, obtaining 
\[
v)\qquad\intop H(t,u)\bar{\sigma}(t)(du)=\min_{u\in U(t)}H(t,u)\quad a.e.\; t\in[t_{0},t_{1}].
\]
\end{thm}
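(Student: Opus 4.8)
The plan is to obtain Theorem~\ref{thm:Hyper-Relaxed} as a limit of the smooth necessary conditions in Lemma~\ref{smooth_hyper} applied to the perturbed problems $(HP^{j})$. First I would fix $j$ and apply Lemma~\ref{smooth_hyper} to $(HP^{j})$ with minimizer $\sigma^{j}\in\mathrm{co}\,\mathcal{S}'$ (which exists by the discussion in Section~\ref{sec:Perturbed-Problem}, since $\mathrm{co}\,\mathcal{S}'$ is convex and sequentially compact): this yields multipliers $l_{0}^{j}\geq 0$, $l_{1}^{j}\in\mathbb{R}^{n}$, measures $\omega^{j}\in\mathrm{frm}^{+}(\mathcal{P})$ with $0<l_{0}^{j}+|l_{1}^{j}|+\omega^{j}(\mathcal{P})\leq 1$, together with matrices $Z^{j}(\cdot)$, $\hat Z^{j}(\pi)(\cdot)$ and the pointwise minimum condition for the smooth Hamiltonian $H^{j}$, the conditional-expectation identity $iii)$ of Lemma~\ref{smooth_hyper}, the maximality $iv)$, and the transversality $v)$. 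The uniform bounds $|l_{0}^{j}|\le1$, $|l_{1}^{j}|\le1$, $\omega^{j}(\mathcal{P})\le1$, $|\partial_{x}h_{i}^{j}|\le L_{h_{i}}$, $|\partial_{x}\hat h^{j}|\le L_{\hat h}$ (Lemma~\ref{lem:Convolution_1}) let me extract a subsequence $J\subset\mathbb{N}$ along which $(l_{0}^{j},l_{1}^{j})\to(l_{0},l_{1})$, $\partial_{x}h_{i}^{j}(y^{j}(t_{1}))\to\mathcal{H}_{i}$, and $\omega^{j}\rightharpoondown\omega$ weakly-$*$ in $\mathrm{frm}^{+}(\mathcal{P})$ (using that $\mathcal{P}$ is compact metric).

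Next I would establish the convergence of the adjoint matrices, item $i)$. Here the key inputs are: the Fredholm-approximation estimates of Lemma~\ref{lem:Properties_Perturbed_Prob}, giving $\hat y^{j}(\sigma^{j}\otimes\pi)(t)\to\hat y(\bar\sigma\otimes\pi)(t)$ uniformly in $t$ and $\pi$ (after passing to the subsequence on which $\sigma^{j}\to\bar\sigma$ — note $\bar\sigma$ is admissible for $(HP^{j})$ for $j$ large, and the $(HP^{j})$-minimizers converge to an $(HP)$-minimizer, which one argues is $\bar\sigma$ by optimality, or more carefully: work with whatever limit $\sigma^{*}$ of $\sigma^{j}$ arises and identify it); the uniform Lipschitz bound $|\partial_{x}f^{j}|\le L_{f}\le1$, $|\partial_{x}\hat f^{j}|\le L_{\hat f}\le1$; and a Gronwall/Arzel\`a--Ascoli argument applied to the linear backward equations defining $Z^{j}$, $\hat Z^{j}(\pi)$. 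The integrands $\partial_{x}f^{j}(s,y^{j}(\sigma^{j})(s),\sigma^{j}(s))$ converge in an $L^{1}$-weak sense along $J$ (this is where relaxed derivatives enter: the limit of the convolved derivatives need not be $\partial_{x}f$ of the limit, but the integrals over $[t,t_{1}]$ converge), so $Z^{j}(t)\to Z(t)$ and $\hat Z^{j}(\pi)(t)\to\hat Z(\pi)(t)$ uniformly in $t$ (and, using continuity of $\pi\mapsto\hat y(\bar\sigma\otimes\pi)$ on $\mathcal{P}$ from Lemma X.3.3 cited in Section~\ref{sec:Relaxed-and-Hyper-relaxed}, uniformly in $\pi$). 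Then $k(t)$, $\hat k(\pi)(t)$, $\mathfrak{h}(\pi,t,u)$ and $H(t,u)$ are defined from these limits, and $ii)$ follows by passing to the limit in $0<l_{0}^{j}+|l_{1}^{j}|+\omega^{j}(\mathcal{P})\le1$, the strict positivity surviving because of condition $iv)$ of Lemma~\ref{smooth_hyper} (the term $\hat h^{j}+c_{\hat h}/j$ at the optimum forces a nontrivial contribution, so the limiting triple cannot vanish — this needs a short argument).

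Then I would pass to the limit in the variational inequality. For fixed $\sigma\in\mathcal{S}'$, condition $ii)$ of Lemma~\ref{smooth_hyper} for $(HP^{j})$ gives $\int_{t_{0}}^{t_{1}}dt\int H^{j}(t,u)(\sigma-\sigma^{j})(t)(du)\ge0$; I integrate, use $\sigma^{j}\to\bar\sigma$, and use the convergence $H^{j}(t,u)\to H(t,u)$ — which requires a Fatou/dominated-convergence argument handling the $\int\mathfrak{h}^{j}(\pi,t,u)\omega^{j}(d\pi)$ term via the suppressed Lemma~\ref{lem:Convergence Result} (weak-$*$ convergence of $\omega^{j}$ together with uniform boundedness and, crucially, \emph{equi-upper-semicontinuity} or monotone-in-$j$ structure of $\mathfrak{h}^{j}$, since the max over $v\in V(t)$ is only upper semicontinuous in the limit). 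The conditional identity $iii)$ of Lemma~\ref{smooth_hyper} is used to linearize $\mathfrak{h}^{j}$ so the $\omega^{j}$-integral is of a function that converges nicely. This yields $iii)$ of the theorem. Condition $iv)$ follows by passing to the limit in $\hat h^{j}(\hat y^{j}(\sigma^{j}\otimes\pi^{*})(t_{1}))=\max_{\pi}\hat h^{j}(\hat y^{j}(\sigma^{j}\otimes\pi)(t_{1}))$ using uniform convergence of $\hat y^{j}$ and of $\hat h^{j}\to\hat h$, plus a standard argument that weak-$*$ limits of measures concentrated on maximizers are concentrated on maximizers of the limit. Finally, $v)$: because $\mathcal{S}'$ is the special denumerable set of Section~\ref{sec:Choice Den Set} — containing, for each rational interval $[a^{j},b^{j}]$ and each $u^{j}$ from a set dense in $U(t)$ a.e., the needle-type variation $\sigma_{j}$ equal to $\delta_{u^{j}(t)}$ on $[a^{j},b^{j}]$ and $\bar\sigma$ elsewhere — inserting each such $\sigma_{j}$ into $iii)$ gives $\int_{a^{j}}^{b^{j}}(H(t,u^{j}(t))-\int H(t,u)\bar\sigma(t)(du))\,dt\ge0$; varying the rational interval and invoking the Lebesgue differentiation theorem yields $H(t,u^{j}(t))\ge\int H(t,u)\bar\sigma(t)(du)$ a.e., and density of $\{u^{j}(t)\}$ in $U(t)$ together with continuity of $u\mapsto H(t,u)$ upgrades this to $\int H(t,u)\bar\sigma(t)(du)=\min_{u\in U(t)}H(t,u)$ a.e.

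The main obstacle I expect is the passage to the limit in the term $\int\mathfrak{h}^{j}(\pi,t,u)\,\omega^{j}(d\pi)$ inside $H^{j}$: the function $\mathfrak{h}^{j}$ involves a maximization over $v\in V(t)$ of $\hat k^{j}(\pi)(t)\cdot\hat f^{j}$, and one must control simultaneously (a) the weak-$*$ convergence $\omega^{j}\rightharpoondown\omega$, (b) the uniform-in-$\pi$ convergence $\hat k^{j}(\pi)(t)\to\hat k(\pi)(t)$, and (c) the fact that $\max_{v}$ of a convergent sequence of functions is only upper semicontinuous in general — so establishing \emph{equality} of the limit of integrals with the integral of the limit (rather than just an inequality) is delicate and is precisely what the (currently commented-out) convergence lemma with nested multifunctions $\Gamma^{j}$ is designed to handle. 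Making that lemma precise, or finding the right equi-continuity hypothesis on the $\hat f^{j}$ in the $v$ variable (which comes from $H1)$ and compactness of $V$), is the crux of the proof.
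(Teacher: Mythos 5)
Your proposal is correct and follows essentially the same route as the paper: apply the smooth necessary conditions (Warga X.3.5, equivalently Lemma \ref{smooth_hyper}) to the perturbed problems $(HP^{j})$, extract convergent subsequences via the uniform Lipschitz bounds, Gronwall and Ascoli--Arzel\`a for $Z^{j},\hat Z^{j}(\pi)$, weak-$*$ compactness for $\omega^{j}$, and then pass to the limit in each condition, finishing $v)$ with the needle variations from the special denumerable set $\mathcal{S}'$. The crux you single out --- justifying $\int\mathfrak{h}^{j}(\pi,t,u)\,\omega^{j}(d\pi)\to\int\mathfrak{h}(\pi,t,u)\,\omega(d\pi)$ despite the $\max_{v}$ --- is exactly the point the paper handles by adding and subtracting $\int\mathfrak{h}^{j}\,\omega(d\pi)$ and exploiting the uniform (in $\pi$) convergence $\hat k^{j}\to\hat k$, $\hat f^{j}\to\hat f$, so that the max converges uniformly rather than merely upper semicontinuously; your remarks on identifying the weak-$*$ limit of $\sigma^{j}$ with $\bar\sigma$ are if anything more careful than the paper's own treatment.
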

\begin{proof}
\textbf{Step 0} We first show that it suffices to prove the theorem  in the special case where $B=\{ \bar{b}\}$ and $\tilde{B}=\{\bar{\tilde{b}}\}$. Indeed, suppose that $B^{*}$ and $\tilde{B}^{*}$ are two arbitrary convex and compact neighbourhood of $\bar{b}$ and $\tilde{\bar{b}}$, respectively and that theorem \ref{thm:Hyper-Relaxed} has been already proved with fixed initial conditions. Denote as $\hat{B}^{*}:=B^{*}\times \tilde{B}^{*}$. We now consider a new problem related to $(HP)$ 
\[
(MHP)\,\left\{ \begin{array}{l}
\mathrm{Minimize}_{\sigma\in \mathcal{S}} \,h_{0}(y(\sigma)(t_{1}))\\
\mathrm{over}\quad \sigma \in \mathcal{S}, \quad \pi \in \mathcal{P} \; \mathrm{and}\; (b(t), \tilde{b}(t))\in \hat{B}^{*}\;\mathrm{a.e}\; t\in[t_{0}-1,t_{1}]\;\mathrm{s.t.}\\
\dot{y}(t)= b(t)- \bar{b} \quad\mathrm{a.e.}\: t\in[t_{0}-1,t_{0}]\\
\dot{y}(t)=f(t,y(t),\sigma(t))\quad\mathrm{a.e.}\: t\in[t_{0},t_{1}]\\
\dot{\tilde{y}}(t)= \tilde{b}(t) - \bar{\tilde{b}}\quad\mathrm{a.e.}\: t\in[t_{0}-1,t_{0}]\\
\dot{\tilde{y}}(t)=\tilde{f}(t,\hat{y}(t),\sigma\otimes\pi(t)) \quad \mathrm{a.e.}\: t\in[t_{0},t_{1}]\\
y(t_{0})=b\in B,\quad \tilde{y}(t_{0})=\tilde{b}\in \tilde{B}\quad\mathrm{and}\quad h_{1}(y(\sigma)(t_{1}))=0\\
\hat{h}(\hat{y}(\sigma \otimes \pi)(t_{1}))\leq 0 \quad \mathrm{ for\: each}\;\pi\in \mathcal{P}
\end{array}\right. ,
\]

\textbf{Step 1}: Consider the sequences of functions
\[
\left\{ Z^{j}(t)=I_{n}+\intop_{t}^{t_{1}}Z^{j}(s)\partial_{x}f^{j}(s,y^{j}(\sigma^{j})(s),\sigma^{j}(s))ds\right\} _{j\in\mathbb{N}}
\]
and 
\[
\left\{ \hat{Z}^{j}(t)(\pi)=I_{n+m}+\intop_{t}^{t_{1}}\hat{Z}^{j}(\pi)(s)\partial_{x}\hat{f}^{j}(s,\hat{y}^{j}(\sigma^{j}\otimes\pi)(s),(\sigma^{j} \otimes \pi)(s))ds\right\} _{j\in\mathbb{N}}
\]
for every $\pi\in\mathcal{P}.$ It is easy to check that both sequences
are equibounded and equicontinuous. The first property follows because$ $
\[
|\partial_{x}f^{j}(t,y^{j}(\sigma^{j})(t),\sigma^{j}(t))|\leq L_{f},\qquad|\partial_{x}\hat{f}^{j}(t,y^{j}(\sigma^{j}\otimes\pi)(t),(\sigma^{j} \otimes \pi)(t))|\leq L_{\hat{f}}
\]
for every $j\in\mathbb{N},$ a.e. $t\in[t_{0},t_{1}],$ and therefore
\[
|Z^{j}(t)|\leq L_{f}\intop_{t_{0}}^{t_{1}}|Z^{j}(s)|ds+1,\qquad|\hat{Z}^{j}(\pi)(t)|\leq L_{\hat{f}}\intop_{t_{0}}^{t_{1}}|\hat{Z}^{j}(\pi)(s)|ds+1.
\]
 In view of the previous inequalities, we can use the Gronwall lemma
which yields
\[
|Z^{j}(s)|\leq1+(t_{1}-t_{0})L_{f}e^{(t_{1}-t_{0})},\quad|\hat{Z}^{j}(\pi)(s)|\leq1+(t_{1}-t_{0})L_{\hat{f}}e^{(t_{1}-t_{0})}.
\]
Therefore both sequences are uniformly bounded. Furthermore, by an
easy calculation, it follows that, for each $\pi\in\mathcal{P},$
\[
|\left(Z^{j}\right)'(t)|\leq|Z^{j}(t)|,\qquad|\left(\hat{Z}^{j}\right)'(\pi)(t)|\leq|\hat{Z}^{j}(\pi)(t)|,
\]
 a.e. $t\in[t_{0},t_{1}].$ These arguments
show that we can apply the Ascoli-Arzel\`a theorem and that there exist
$J_{1}\subset\mathbb{N}$ and continuous functions $Z(.),\hat{Z}(\pi)(.)$
such that  
\[
\lim_{j\in J_{1}}Z^{j}(t)=Z(t),\qquad\lim_{j\in J_{1}}\hat{Z}^{j}(\pi)(t)=\hat{Z}(\pi)(t),
\]
uniformly with respect to $t\in[t_{0},t_{1}]$ and $\pi\in\mathcal{P}.$ 

The perturbed problems $(HP^{j})$ have $C^{1}$ data and solutions
$\sigma^{j}\in\mathrm{co}\mathcal{S}'$. Therefore theorem (\cite{key-4},
X.3.5, pp 505) can be applied for the minimizer $(\sigma^{j},\bar{b},\bar{b}_{P})$.
In particular from (\cite{key-4}, X.3.5, pp 505, point (1)), it follows
that there exist $l_{0}^{j}\geq0,$ $l_{1}^{j}\in\mathbb{R}^{n}$
and $\omega^{j}\in\mathrm{f.r.m.}^{+}(\mathcal{P})$ such that, 
\[
0<l_{0}^{j}+|l_{1}^{j}|+\omega^{j}(\mathcal{P})\leq1.
\]
By standard compactness arguments, we can find a subset
$J_{2}\subset J_{1},$ $l_{0}\geq0,$ $l_{1}\in\mathbb{R}^{n}$ and
$\omega\in\mathrm{f.r.m.}^{+}(\mathcal{P})$ such that
\[
\lim_{j\in J_{2}}l_{0}^{j}=l_{0},\qquad\lim_{j\in J_{2}}l_{1}^{j}=l_{1},\qquad\lim_{j\in J_{2}}\omega^{j}=\omega\quad\mathrm{weakly-*}
\]
and
\[
0<l_{0}+|l_{1}|+\omega(\mathcal{P})\leq1.
\]

By lemma \ref{lem:Properties_Perturbed_Prob} $(i)$ and using the result
of (\cite{key-4}, Theorem VI.I.6, pp. 348), there exists $J_{3}\subset J_{2}$
such that
\begin{equation}
\lim_{j\in J_{3}}y^{j}(\sigma^{j})(t)=\lim_{j\in J_{3}}y(\sigma^{j})(t)=y(\bar{\sigma})(t) \label{eq:convergence_y} 
\end{equation}
and, since the continuity of the functions $\pi \mapsto \hat{y}^{j}(\sigma\otimes \pi)(.) $ for every $\sigma \in \mathrm{co} \mathcal{S}',$ 
\begin{equation}
\lim_{j\in J_{3}}\hat{y}^{j}(\sigma^{j}\otimes\pi)(t)=\lim_{j\in J_{3}}\hat{y}(\sigma^{j}\otimes \pi)(t)=\hat{y}(\bar{\sigma}\otimes\pi)(t) \label{eq:convergence_yhat}
\end{equation}
uniformly with respect to $\pi \in \mathcal{P}$ and $t\in[t_{0},t_{1}]$. (The fact that $\sigma^{j}\rightharpoondown \bar{\sigma}$ follows from the optimality of $\sigma^{j}$. Indeed
$$ \lim_{j\in J_{3}} h^{j}_{0}(y^{j}(\sigma^{j})(t_{1})=h_{0}(y(\bar\sigma)(t_{1})) \leq \lim_{j\in J_{3}} h^{j}_{0}(y^{j}(\sigma)(t_{1})=h_{0}(y(\sigma)(t_{1}))$$
for all $\sigma \in \mathrm{co}\mathcal{S}'$).

From lemma \ref{lem:Convolution_1}, it follows that $|\partial_{x}h_{0}^{j}(y^{j}(\sigma^{j})(t_{1}))|\leq L_{h_{0}},$
$|\partial_{x}h_{1}^{j}(y^{j}(\sigma^{j})(t_{1}))|\leq L_{h_{1}}$
and $|\partial_{x}\hat{h}(\hat{y}^{j}(\sigma^{j}\otimes\pi)(t_{1}))|\leq L_{\hat{h}}$
for every $\pi\in\mathcal{P}.$ Then, using a similar convergence
analysis, we can suppose that there exist $J\subset J_{3},$ $\mathcal{H}_{0},\mathcal{H}_{1}\in\mathbb{R}^{n}$
such that 
\[
\lim_{j\in J}\partial_{x}h_{0}^{j}(y^{j}(\sigma^{j})(t_{1}))=\mathcal{H}_{0},\qquad\lim_{j\in J}\partial_{x}h_{1}^{j}(y^{j}(\sigma^{j})(t_{1}))=\mathcal{H}_{1},
\]
and, using (\cite{key-4}, Lemma X.I.6 pp. 489) for each fixed $\pi\in\mathcal{P},$ there exist a vector $\hat{\mathcal{H}}(\pi)$ and a sequence $\{\pi^{j}\}_{j\in J}\subset \mathcal{P},$
such that 
\[
\lim_{j\in J}\partial_{x}\hat{h}^{j}(\hat{y}^{j}(\sigma^{j}\otimes\pi^{j})(t_{1}))=\hat{\mathcal{H}}(\pi).
\]

This completes the proof of points $i)$ and $ii).$

\textbf{Step 2:} We observe that the functions $\hat{\mathcal{H}}(.):\mathcal{P}\rightarrow\mathbb{R}^{n}$
and $\hat{Z}(.)(t):\mathcal{P}\rightarrow\mathbb{R}^{n+m}$ are pointwise 
limits of  sequences of continuous functions $\pi\mapsto\{\partial_{x}\hat{h}^{j}(\hat{y}^{j}(\sigma^{j}\otimes\pi)(t_{1}))\}_{j\in\mathbb{N}}$
and $\pi\mapsto\{\hat{Z}^{j}(\pi)(t)\}$; the latter converges uniformly
with respect to $\pi \in \mathcal{P}$. Since the set $\mathcal{P}$
is equipped with the Borel $\mathcal{B}(\mathcal{P})-$field, the continuous
functions $\pi\mapsto\{\partial_{x}\hat{h}^{j}(\hat{y}^{j}(\sigma^{j}\otimes\pi)(t_{1}))\}_{j\in\mathbb{N}}$
and $\pi\mapsto\{\hat{Z}^{j}(\pi)(t)\}$ are $\omega$-measurable
and their limit functions $\mathcal{H}(.)$ and $\hat{Z}(.)(t)$ are
also $\mathcal{B}(\mathcal{P})-$measurable.

From (\cite{key-4}, Theorem X.3.5, point (3)), it follows that

\[
\hat{h}^{j}(\hat{y}^{j}(\sigma^{j}\otimes\pi^{*})(t_{1}))=\max_{\pi\in\mathcal{P}}\hat{h}^{j}(\hat{y}^{j}(\sigma^{j}\otimes\pi)(t_{1})),\quad\omega^{j}-\mathrm{a.a.}\,\pi^{*}
\]
which is equivalent to
\[
\hat{h}^{j}(\hat{y}^{j}(\sigma^{j}\otimes\pi)(t_{1}))\leq\hat{h}^{j}(\hat{y}^{j}(\sigma^{j}\otimes\pi^{*})(t_{1}))\quad\omega^{j}-\mathrm{a.a.}\,\pi^{*},\:\forall\pi\in\mathcal{P}.
\]

Since $\omega^{j}$ is a positive measure for each $j,$ it follows
that 
\begin{equation}
\intop\left[\hat{h}^{j}(\hat{y}^{j}(\sigma^{j}\otimes\pi)-\hat{h}^{j}(\hat{y}^{j}(\sigma^{j}\otimes\pi^{*}))\right]\omega^{j}(d\pi^{*})\leq0,\qquad\forall\pi\in\mathcal{P}.\label{eq:pi_max_integral}
\end{equation}
We recall that, for $\sigma\in\mathrm{co}\mathcal{S}'$ and $\pi\in\mathcal{P},$
the function $\sigma\otimes\pi\mapsto\hat{h}^{j}(\hat{y}^{j}(\sigma\otimes\pi)(t_{1}))$
is continuous for every $j$, $\pi\mapsto\hat{h}^{j}(\hat{y}^{j}(\sigma^{j}\otimes\pi)(t_{1}))$
is continuous for every $j$ and $\omega^{j}\rightharpoondown\omega,$
$\sigma^{j}\rightharpoondown\bar{\sigma}$ weakly-{*}.
\begin{comment}
By lemma \ref{lem:Convergence Result},
\end{comment}
Adding and subtracting the term $\int \hat{h}^{j}(\hat{y}^{j}(\sigma^{j} \otimes \pi^{*})(t_{1}))\omega(d\pi^{*})$ (which is well-defined since the continuity properties of $\hat{h}(\hat{y}(\sigma\otimes\,.)(t_{1}))$), we can estimate
$$| \int \hat{h}^{j}(\hat{y}^{j}(\sigma^{j} \otimes \pi^{*})(t_{1}))(\omega-\omega^{j})(d\pi^{*}) | \leq |\sup_{\sigma\otimes \pi}\hat{h}(\hat{y}(\sigma \otimes \pi)(t_{1})) |\times |\int(\omega-\omega^{j})(d\pi^{*})| \rightarrow 0,$$
where the right hand side lets to 0 since $\omega^{j}\rightharpoondown \omega$ weakly-{*}. Then, with the help of the dominated convergence theorem, we can pass to the limit in (\ref{eq:pi_max_integral}), obtaining
\[
\intop\left[\hat{h}(\hat{y}(\bar{\sigma}\otimes\pi))-\hat{h}(\hat{y}(\bar{\sigma}\otimes\pi^{*}))\right]\omega(d\pi^{*})\leq0,\qquad\forall\pi\in\mathcal{P}.
\]

Define the sequences of functions $\hat{k}^{j}:\mathcal{P}\times[t_{0},t_{1}] \rightarrow \mathbb{R}^{n+m}$ and $\mathfrak{h}^{j}:\mathcal{P}\times[t_{0},t_{1}]\times R \rightarrow \mathbb{R}$ such that
\[
\hat{k}^{j}(\pi)(t):=\partial_{x}\hat{h}^{j}(\hat{y}^{j}(\sigma^{j}\otimes\pi)(t_{1}))\hat{Z}^{j}(\pi)(t)
\]
and 
\[
\mathfrak{h}^{j}(\pi,t,u):=\max_{v\in v(t)}\hat{k}^{j}(\pi)(t)\cdot\hat{f}^{j}(t,\hat{y}^{j}(\sigma^{j}\otimes\pi)(t),u,v).
\]
With the help of lemma \ref{lem:Properties_Perturbed_Prob}, it is
easy to check that, for every $\pi\in\mathcal{P},$ $\hat{k}^{j}(.)(\pi)\rightarrow\hat{k}(.)(\pi)$
in $L^{1}([t_{0},t_{1}],dt).$ Using again lemma \ref{lem:Properties_Perturbed_Prob},
it follows that $\mathfrak{h}^{j}(\pi,.,u)\rightarrow\mathfrak{h}(\pi,.,u)$
in $L^{1}$ for every $\pi\in\mathcal{P}$,  $r\in R,$ and $\mathfrak{h}^{j}(\pi,t,.)\rightarrow\mathfrak{h}(\pi,t,.)$
uniformly a.e. $t\in[t_{0},t_{1}],$ for every $\pi\in \mathcal{P}$ and $\mathfrak{h}^{j}(.,t,u)\rightarrow\mathfrak{h}(.,t,u)$ punctually, for every $r\in R$, a.e. $t\in [t_{0},t_{1}]$ 

By (\cite{key-4}, Theorem X.3.5, point (4)), the function $\pi\rightarrow\mathfrak{h}^{j}(\pi,t,u)$
is $\omega^{j}-$integrable for each $j,$ $r\in R,$ a.e. $t\in[t_{0},t_{1}]$
and can be expressed by the relation
\[
\mathfrak{h}^{j}(\pi,t,u)=\intop\hat{k}^{j}(\pi)(t)\cdot\hat{f}^{j}(t,\hat{y}^{j}(\sigma^{j}\otimes\pi)(t),u,v)\pi(t,u)(dv),
\]
for $\omega^{j}-$a.a. $\pi\in\mathcal{P}$ and $\tilde{\zeta}-$a.a
$(t,u)\in[t_{0},t_{1}]\times R,$ where $\tilde{\zeta}$ is the positive
Radon measure introduced in section \ref{sec:Relaxed-and-Hyper-relaxed}.

It follows that
\[
\intop\omega^{j}(d\pi)\intop[\mathfrak{h}^{j}(\pi,t,u)-\intop\hat{k}^{j}(\pi)(t)\cdot\hat{f}^{j}(t,\hat{y}^{j}(\sigma^{j}\otimes\pi)(t),u,v)\pi(t,u)(dv)]\tilde{\zeta}(dt,du)=0.
\]
Define the function 
\[
I^{j}(\pi):=\intop\left[\mathfrak{h}^{j}(\pi,t,u)-\intop\hat{k}^{j}(\pi)(t)\cdot\hat{f}^{j}(t,\hat{y}^{j}(\sigma^{j}\otimes\pi)(t),u,v)\pi(t,u)(dv)\right]\tilde{\zeta}(dt,du)
\]
which is $\omega^{j}-$integrable and the function
\[
I(\pi):=\int\left[\mathfrak{h}(\pi,t,u)-\intop\hat{k}(\pi)(t)\cdot\hat{f}(t,\hat{y}(\bar{\sigma}\otimes\pi)(t),u,v)\pi(t,u)(dv)\right]\tilde{\zeta}(dt,du)
\]
which is the pointwise limit of $I^{j}(\pi).$

With a similar analysis used above, we can add and subtract $\int I^{j}(\pi) \omega(d\pi)$ (still well defined by the continuity of $I^{j}(.)$) and 
$$ | \int I^{j}(\pi) (\omega-\omega^{j})(d\pi) | \leq K |\int (\omega-\omega^{j})(d\pi))| \rightarrow 0 $$
since $\omega^{j}\rightharpoondown \omega$ weakly-{*}, where $K:=2L_{\hat{h}} ||\chi(.)||_{L^{1}}$.
\begin{comment}
Defining the multifunction $\Gamma^{j}(\pi):= I(\pi) + \tilde\varepsilon^{j}CB,$ where $C:=2L_{\hat{f}}L_{\hat{h}}$ and $\tilde\varepsilon^{j}$ depends on lemma 8.1, (ii) and  the relation $\eqref{eq:convergence_yhat}$, it is easy to check that all the hypotheses of lemma \ref{lem:Convergence Result} are satisfied.
\end{comment}
Then, with the help of the dominated convergence theorem, it follows that
\[
\intop\omega(d\pi)\intop\left[\mathfrak{h}(\pi,t,u)-\intop\hat{k}(\pi)(t)\cdot\hat{f}(t,\hat{y}(\bar{\sigma}\otimes\pi)(t),u,v)\pi(t,u)(dv)\right]\tilde{\zeta}(dt,du)=0.
\]
From the definition of $\mathfrak{h}^{j}(.,.,.)$, it follows that
$I^{j}(\pi)\geq0$ $\omega^{j}-$a.a. $\pi\in\mathcal{P},$ as well
as its limit function $I(\pi)\geq 0$
 $\omega-$a.a. $\pi\in\mathcal{P}.$ Using lemma \ref{lem:X.2.2 Warga},
we obtain

\[
\intop\omega(d\pi)\intop_{t_{0}}^{t_{1}}dt\intop\left[\mathfrak{h}(\pi,t,u)-\intop\hat{k}(\pi)(t)\cdot\hat{f}(t,\hat{y}(\bar{\sigma}\otimes\pi)(t),u,v)\pi(t,u)(dv)\right]\bar{\sigma}(t)(du)=0.
\]
 Since all the measures involved in the integrals are positive definite,
it follows that 
\[
\mathfrak{h}(\pi,t,u)=\intop\hat{k}(\pi)(t)\cdot\hat{f}(t,\hat{y}(\bar{\sigma}\otimes\pi)(t),u,v)\pi(t,u)(dv),
\]
$\omega-$a.a. $\pi\in\mathcal{P},$ a.e. $t\in[t_{0},t_{1}],$ $\bar{\sigma}(t)-$a.a.
$r\in R.$

\textbf{Step 3: }We now derive relation $iv)$ from (\cite{key-4},
Theorem X.3.5, point (2)). Define the functions
\[
k^{j}(t):=\left(l_{0}^{j}\partial_{x}h_{0}^{j}(y^{j}(\sigma^{j})(t_{1}))+l_{1}^{j}\partial_{x}h_{1}^{j}(y^{j}(\sigma^{j})(t_{1}))\right)Z^{j}(t)
\]
and 

\[
H^{j}(t,u):=k^{j}(t)\cdot f^{j}(t,y^{j}(\sigma^{j})(t),u)+\intop\mathfrak{h}^{j}(\pi,t,u)\omega^{j}(d\pi).
\]
From (\cite{key-4}, Theorem X.3.5, point (2)), it follows that
\[
\intop H^{j}(t,u)(\sigma-\sigma^{j})(t)(du)\geq0,\qquad\mathrm{a.e.}\; t\in[t_{0},t_{1}],
\]
which can be explicitly written as
\[
\intop k^{j}(t)\cdot f^{j}(t,y^{j}(\sigma^{j})(t),u)(\sigma-\sigma^{j})(t)(du)+\intop(\sigma-\sigma^{j})(t)(du)\intop\mathfrak{h}^{j}(\pi,t,u)\omega^{j}(d\pi)\geq0,
\]
for every $\sigma\in\mathrm{co}\mathcal{S}'$, a.e. $t\in[t_{0},t_{1}].$
We now observe that the function $r\mapsto\int\mathfrak{h}^{j}(\pi,t,u)\omega^{j}(d\pi)$
is continuous for every $j\in\mathbb{N},$ a.e. $t\in[t_{0},t_{1}].$ Furthermore, adding and subtracting the term $\int\mathfrak{h}^{j}(\pi,t,u)\omega(d\pi)$, we can prove that $r\mapsto\int\mathfrak{h}^{j}(\pi,t,u)\omega^{j}(d\pi)$ converges uniformly to $r\mapsto\int\mathfrak{h}(\pi,t,u)\omega(d\pi)$, a.e. $t\in[t_{0},t_{1}]$ (in turns, we have used the weakly-* convergence of the sequence $\{ \omega^{j} \}$ and the continuity of the the function $r\mapsto \mathfrak{h}(\pi,r,t)$, for every $\pi\in \mathcal{P}$ and a.e. $t\in [t_{0},t_{1}]$). This in particular implies that the functions $r\mapsto\int\mathfrak{h}(\pi,t,u)\omega(d\pi)$ and $r\mapsto H(t,u)$  are continuous, a.e. $t\in [t_{0}, t_{1}]$.
A use of  the dominated convergence theorem and the convergence of $\sigma^{j} \rightharpoondown \bar{\sigma}$ weakly-*
permits us to pass to the limit in the relation above, yielding
\[
\intop k(t)\cdot f(t,y(\bar{\sigma})(t),u)(\sigma-\bar{\sigma})(t)(du)+\intop(\sigma-\bar{\sigma})(t)(du)\intop\mathfrak{h}(\pi,t,u)\omega(d\pi)\geq0,
\]
which is exactly the relation
\[
\intop H(t,u)(\sigma-\bar{\sigma})(t)(du)\geq0
\]
for every $\sigma\in\mathrm{co}\mathcal{S}'$, a.e. $t\in[t_{0},t_{1}].$
By integrating with respect to $t$ on $[t_{0},t_{1}],$ we obtain
relation $iv).$

\textbf{Step 4: }In this last step, we derive a pointwise condition
for the function $H(.,.).$ We preliminary observe that the $H(.,.)$
is integrable with respect to $t$ for every $r\in R$ and continuous
with respect to $r$ a.e. $t\in[t_{0},t_{1}],$ since the functions
$(t,u)\mapsto\mathfrak{h}(\pi,t,u)$ and $(t,u)\mapsto k(t)f(t,y(\bar{\sigma})(t),u)$
satisfy the same property. 

We recall that $\mathcal{S}'$ is defined as in section \ref{sec:Choice Den Set}.
It follows that, for every $u(.)\in\mathcal{R}_{\infty},$ there exists
a null set $Z_{u}\subset[t_{0},t_{1}]$ such that
\[
H(t,u(t))-\intop H(t,u)\bar{\sigma}(t)(du)\geq0\qquad\forall t\,\in[t_{0},t_{1}]\backslash Z_{u}.
\]
Since the set $\mathcal{R}_{\infty}$ is denumerable, then $Z:=\cup_{u\in\mathcal{R}_{\infty}}Z_{u}$
is still a null set. The set $\{u(t):\, u\in\mathcal{R}_{\infty}\}$
is dense in $R(t)$ a.e. $t\in[t_{0},t_{1}]$ and this implies
\[
\intop H(t,u)\bar{\sigma}(t)=\inf_{u\in\mathcal{R}_{\infty}}H(t,u(t))=\min_{u\in U(t)}H(t,u).
\]
This completes the proof.
\end{proof}
The result proved for hyperrelaxed controls can be similarly derived
also for relaxed adverse control problems. In this case, we do not
need to choose any denumerable subset $\mathcal{S}'$ of $\mathcal{S}$
and the analysis convergence is more straightforward. Now we denote
by $y^{j}(\sigma)(.)$ the unique solution of equation (\ref{eq:Pert_Eq_1})
for $\sigma\in\mathcal{S}$ and by $y^{j}(\sigma\times\sigma_{P})(.)$
the unique solution of (\ref{eq:Perturb_Eq_2}) for $\sigma\in\mathcal{S}$
and $\sigma_{P}\in\mathcal{S}_{P}$. The problem $(RP^{j})$ has a solution
$\sigma^{j}\in\mathcal{S}$ for every $j.$ The function $Z^{j}(t)$
is the same of theorem \ref{thm:Hyper-Relaxed}, while
\[
\hat{Z}^{j}(\sigma_{P})(t)=I_{n+m}+\intop_{t}^{t_{1}}\hat{Z}^{j}(\sigma_{P})(s)\partial_{x}\hat{f}^{j}(s,\hat{y}^{j}(\sigma^{j}\times\sigma_{P})(s),(\sigma^{j}\times\sigma_{P})(s))ds
\]
\begin{comment}
(as before, we use the notation
\[
\varphi(a,\sigma(t)):=\intop\varphi(a,u)\sigma(t)(du)
\] 
and
\[
\phi(a,\sigma\times \sigma_{P}(t)):=\intop\phi(a,u,v)\sigma(t) (du)\times \sigma_{P}(t)(dv)
\] 
$a\in A$, where $A$ is a given set  and $\varphi:A\times R \rightarrow \mathbb{R}$, $\phi:A\times R \times v\rightarrow \mathbb{R}$ are given functions). 
\end{comment}
We deal with the derivatives of the functions $h_{1}^{j}(.)$
and $\hat{h}^{j}(.)$ instead that considering the derivatives of $H_{1}^{j}(.)$ and
$\hat{H}^{j}(.)$. In what follows, we just sketch the proof in the relaxed case, pointing out the main differences with analysis carried out in theorem \ref{thm:Hyper-Relaxed}.
\begin{thm}
\label{thm:Relaxed Prob}Let $(\bar{\sigma},\bar{b}, \bar{b}_{P})$ be an optimal
solution to the problem $(RP)$. Then there exist a set of index $J\subset\mathbb{N}$
, limiting multipliers $l_{0}\geq0,$ $l_{1}\in\mathbb{R},$ limiting
initial directions $\mathcal{H}_{0},\mathcal{H}_{1}\in\mathbb{R}^{n},$
a $\omega\in\mathrm{f.r.m.^{+}}(\mathcal{S}_{P})$, a $\omega-$integrable
function $\hat{\mathcal{H}}:\mathcal{S}_{P}\rightarrow\mathbb{R}^{n+m},$
and  the continuous functions
$Z:[t_{0},t_{1}]\rightarrow M_{n\times n},$ $\hat{Z}:[t_{0},t_{1}]\times\mathcal{S}_{P}\rightarrow M_{\left(n+m\right)\times(n+m)},$
such that:

\[
i)\quad Z(t)=\lim_{j\in J}Z^{j}(t), \qquad  \hat{Z}(\sigma_{P})(t)=\lim_{j\in J}\;\hat{Z}^{j}(\sigma_{P})(t)  \quad \mathrm{uniformly} \; t\in[t_{0},t_{1}], \; \sigma_{P}\in\mathcal{S}_{P},
\]

\[
(l_{0},l_{1})=\lim_{j\in J}(l_{0}^{j},l_{1}^{j}),\quad\mathcal{H}_{0}=\lim_{j\in J}\partial_{x}h_{0}^{j}(y^{j}(t_{1})),\quad\mathcal{H}_{1}=\lim_{j\in J}\partial_{x}h_{1}^{j}(y^{j}(t_{1})),
\]
\[
\hat{\mathcal{H}}(\sigma_{P})=\lim_{j\in J}\partial_{x}\hat{h}^{j}(\hat{y}(\sigma^{j}\times\sigma_{P})(t_{1})),\qquad\omega-a.a.\quad\sigma_{P}\in\mathcal{S}_{P};
\]

\[
ii)\qquad\quad0<l_{0}+\left|l_{1}\right|+\omega(\mathcal{S}_{P})\leq1
\]

Define:
\[
k(t):=\left(l_{0}\mathcal{H}_{0}+l_{1}\mathcal{H}_{1}\right)Z(t),\quad\hat{k}(\sigma_{P})(t):=\hat{\mathcal{H}}(\sigma_{P})\hat{Z}(\sigma_{P})(t),
\]
\[
\mathfrak{h}(\sigma_{P},t,s):=\max_{v\in v(t)}\hat{k}(\sigma_{P})(t)\cdot\intop\hat{f}(t,\hat{y}(\bar{\sigma}\times\sigma_{P})(t),u,v)s(du),
\]

\[
H(t,s):=k(t)f(t,y(\bar{\sigma})(t),s)+\intop\mathfrak{h}(\sigma_{P},t,s)\omega(d\sigma_{P}),\quad(s\in\mathcal{S}).
\]
%\[
%\lambda:=\intop\hat{k}(\sigma_{P})(t_{0})\omega(d\sigma_{P}).
%\]
Then
\[
iii)\qquad\;\intop H(t,u)\bar{\sigma}(t)(du)=\min_{u\in U(t)}H(t,u)\quad a.e.\; t\in[t_{0},t_{1}],
\]

\[
iv)\qquad\hat{h}(\hat{y}(\bar{\sigma}\times\sigma_{P}^{*})(t_{1})=\max_{\sigma_{P}\in\mathcal{S}_{P}}\hat{h}(\hat{y}(\bar{\sigma}\times\sigma_{P})(t_{1}))\quad\omega-a.a.\:\sigma_{P}^{*}\in\mathcal{S}_{P}.
\]
%\[
%v)\qquad k(t_{0})b+\lambda\hat{b}=\min_{a\in B,\tilde{a}\in\tilde{B}}\left[k(t_{0})a+\lambda\cdot(a,\tilde{a})\right].
%\]
\end{thm}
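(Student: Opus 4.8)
The plan is to run, almost verbatim, the four--step argument of the proof of Theorem~\ref{thm:Hyper-Relaxed}, performing throughout the substitutions ``$\pi$ by $\sigma_{P}$'', ``$\mathcal{P}$ by $\mathcal{S}_{P}$'', ``$\otimes$ by $\times$'', and working with the full set $\mathcal{S}$ wherever $\mathrm{co}\,\mathcal{S}'$ appeared. The starting point is the family of perturbed problems $(RP^{j})$ introduced in Section~\ref{sec:Perturbed-Problem}: for $j$ large $\bar{\sigma}$ is admissible for $(RP^{j})$ (after the reduction of Step~0 of Theorem~\ref{thm:Hyper-Relaxed}, which applies verbatim, we may assume the initial datum fixed at $(\bar{b},\bar{b}_{P})$), so $(RP^{j})$ admits a minimizer $\sigma^{j}\in\mathcal{S}$; since $(RP^{j})$ has $C^{1}$ data, I would apply Lemma~\ref{smooth_relaxed} to $(\sigma^{j},\bar{b},\bar{b}_{P})$, obtaining $l_{0}^{j}\geq0$, $l_{1}^{j}\in\mathbb{R}^{n}$, $\omega^{j}\in\mathrm{f.r.m.}^{+}(\mathcal{S}_{P})$ with $0<l_{0}^{j}+|l_{1}^{j}|+\omega^{j}(\mathcal{S}_{P})\leq 1$, together with the smooth optimality relations $(ii)$--$(iv)$ of that lemma written for the mollified data $f^{j},\hat{f}^{j},h_{i}^{j},\hat{h}^{j}$.

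For the compactness step I would argue exactly as in Step~1 of Theorem~\ref{thm:Hyper-Relaxed}: the uniform bounds $|\partial_{x}f^{j}|\leq L_{f}$ and $|\partial_{x}\hat{f}^{j}|\leq L_{\hat{f}}$ together with Gronwall make $\{Z^{j}\}$ and $\{\hat{Z}^{j}(\sigma_{P})\}$ equibounded and equicontinuous, so Ascoli--Arzel\`a yields $J_{1}\subset\mathbb{N}$ along which $Z^{j}\to Z$ and $\hat{Z}^{j}(\sigma_{P})\to\hat{Z}(\sigma_{P})$ uniformly in $t$ and $\sigma_{P}$ --- and here no denumerable subset is needed, since $\sigma_{P}\mapsto\sigma\times\sigma_{P}$ is continuous for \emph{every} $\sigma\in\mathcal{S}$. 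Weak--$*$ compactness of $\{\omega^{j}\}$ and boundedness of $(l_{0}^{j},l_{1}^{j})$ give $J_{2}\subset J_{1}$ with $l_{0}^{j}\to l_{0}$, $l_{1}^{j}\to l_{1}$, $\omega^{j}\rightharpoondown\omega$ and $0<l_{0}+|l_{1}|+\omega(\mathcal{S}_{P})\leq1$; Lemma~\ref{lem:Properties_Perturbed_Prob}$(i)$ and \cite{key-4}, Theorem~VI.I.6 give $J_{3}\subset J_{2}$ along which $y^{j}(\sigma^{j})(t)\to y(\bar{\sigma})(t)$ and $\hat{y}^{j}(\sigma^{j}\times\sigma_{P})(t)\to\hat{y}(\bar{\sigma}\times\sigma_{P})(t)$ uniformly (with $\sigma^{j}\rightharpoondown\bar{\sigma}$ deduced from optimality of $\sigma^{j}$, as in Theorem~\ref{thm:Hyper-Relaxed}); and Lemma~\ref{lem:Convolution_1} together with \cite{key-4}, Lemma~X.I.6 produces $J\subset J_{3}$, vectors $\mathcal{H}_{0},\mathcal{H}_{1}$ and the $\omega$--integrable map $\hat{\mathcal{H}}(\cdot)$ realizing the limits in $i)$. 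The $\mathcal{B}(\mathcal{S}_{P})$--measurability of $\hat{\mathcal{H}}(\cdot)$ and $\hat{Z}(\cdot)(t)$ follows as in Step~2 of Theorem~\ref{thm:Hyper-Relaxed}, since these are pointwise limits of the continuous functions $\sigma_{P}\mapsto\partial_{x}\hat{h}^{j}(\hat{y}^{j}(\sigma^{j}\times\sigma_{P})(t_{1}))$ and $\sigma_{P}\mapsto\hat{Z}^{j}(\sigma_{P})(t)$. This settles $i)$ and $ii)$.

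Condition $iv)$ I would get from Lemma~\ref{smooth_relaxed}$(iv)$ as in Step~2 of Theorem~\ref{thm:Hyper-Relaxed}: the relation $\hat{h}^{j}(\hat{y}^{j}(\sigma^{j}\times\sigma_{P}^{*})(t_{1}))=\max_{\sigma_{P}}\hat{h}^{j}(\hat{y}^{j}(\sigma^{j}\times\sigma_{P})(t_{1}))$, $\omega^{j}$--a.a.\ $\sigma_{P}^{*}$, rewrites as $\int[\hat{h}^{j}(\hat{y}^{j}(\sigma^{j}\times\sigma_{P})(t_{1}))-\hat{h}^{j}(\hat{y}^{j}(\sigma^{j}\times\sigma_{P}^{*})(t_{1}))]\,\omega^{j}(d\sigma_{P}^{*})\leq0$; using Lemma~\ref{lem:Properties_Perturbed_Prob}$(iv)$ and the uniform state convergence, $\hat{h}^{j}(\hat{y}^{j}(\sigma^{j}\times\,\cdot\,)(t_{1}))\to\hat{h}(\hat{y}(\bar{\sigma}\times\,\cdot\,)(t_{1}))$ uniformly on $\mathcal{S}_{P}$, so an add--and--subtract of $\int\hat{h}^{j}(\hat{y}^{j}(\sigma^{j}\times\sigma_{P}^{*})(t_{1}))\,\omega(d\sigma_{P}^{*})$, the vanishing of the remainder by $\omega^{j}\rightharpoondown\omega$, and dominated convergence yield $\int[\hat{h}(\hat{y}(\bar{\sigma}\times\sigma_{P})(t_{1}))-\hat{h}(\hat{y}(\bar{\sigma}\times\sigma_{P}^{*})(t_{1}))]\,\omega(d\sigma_{P}^{*})\leq0$ for all $\sigma_{P}$, which is $iv)$ since $\omega\geq0$. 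Condition $iii)$ I would derive from the perturbed version of Lemma~\ref{smooth_relaxed}$(ii)$: with $k^{j}$, $\hat{k}^{j}(\sigma_{P})$, $\mathfrak{h}^{j}(\sigma_{P},t,u)$, $H^{j}(t,u)$ the mollified analogues of $k,\hat{k},\mathfrak{h},H$, one has $\int H^{j}(t,u)(\sigma-\sigma^{j})(t)(du)\geq0$ a.e.\ $t$, for all $\sigma\in\mathcal{S}$; integrating over $[t_{0},t_{1}]$ and passing to the limit --- via $k^{j}\to k$ in $L^{1}(dt)$, the uniform convergences of $f^{j},\hat{f}^{j},y^{j},\hat{y}^{j}$, the uniform--in--$u$, $L^{1}$--in--$t$ convergence $\int\mathfrak{h}^{j}(\sigma_{P},t,u)\,\omega^{j}(d\sigma_{P})\to\int\mathfrak{h}(\sigma_{P},t,u)\,\omega(d\sigma_{P})$ (obtained by the same add--and--subtract argument with $\omega$, using weak--$*$ convergence and continuity of $\sigma_{P}\mapsto\mathfrak{h}(\sigma_{P},t,u)$), and $\sigma^{j}\rightharpoondown\bar{\sigma}$ --- gives $\int_{t_{0}}^{t_{1}}dt\int H(t,u)(\sigma-\bar{\sigma})(t)(du)\geq0$ for all $\sigma\in\mathcal{S}$. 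Since $\mathcal{S}$ contains every control of the form $\delta_{u(t)}$ on a measurable $E\subset[t_{0},t_{1}]$ and $\bar{\sigma}(t)$ off $E$, with $u(\cdot)$ a measurable selection of $U(\cdot)$, this localizes to $H(t,u(t))\geq\int H(t,u)\bar{\sigma}(t)(du)$ a.e.\ $t$; running $u(\cdot)$ over the denumerable family with values dense in $U(\cdot)$ a.e.\ (cf.\ Section~\ref{sec:Choice Den Set}) and using continuity of $u\mapsto H(t,u)$ (a.e.\ $t$, established along the way) yields $\int H(t,u)\bar{\sigma}(t)(du)=\min_{u\in U(t)}H(t,u)$ a.e.\ $t$, i.e.\ $iii)$.

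The step I expect to be the main obstacle is the passage to the limit in the measure terms $\int\mathfrak{h}^{j}(\sigma_{P},t,u)\,\omega^{j}(d\sigma_{P})$ inside the variational inequality: one must simultaneously handle the weak--$*$ convergence $\omega^{j}\rightharpoondown\omega$ and the fact that $\mathfrak{h}^{j}\to\mathfrak{h}$ only pointwise in $\sigma_{P}$ (and $L^{1}$ in $t$, uniformly in $u$), and extract enough uniformity in $u$ to commute the limit both with $\min_{u\in U(t)}$ and with integration against the varying relaxed control $\sigma^{j}$. What makes this work is the add--and--subtract reduction of the $\omega^{j}$--integral to an $\omega$--integral plus a remainder that is controlled by $|\mathfrak{h}^{j}|\leq 2L_{\hat{h}}\|\chi\|_{L^{1}}$ and the weak--$*$ convergence, together with the joint continuity of $\sigma_{P}\mapsto\hat{y}(\bar{\sigma}\times\sigma_{P})$; all remaining estimates are the routine Gronwall/Lipschitz computations already carried out in Lemmas~\ref{lem:Convolution_1} and~\ref{lem:Properties_Perturbed_Prob}.
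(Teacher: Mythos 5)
Your proposal is correct and follows essentially the same route as the paper: mollify the data, apply the smooth relaxed necessary conditions (Lemma~\ref{smooth_relaxed}, i.e.\ Warga X.3.7) to the perturbed problems $(RP^{j})$, extract limits via Ascoli--Arzel\`a and weak--$*$ compactness, pass to the limit in the multiplier relations with the add--and--subtract device for $\omega^{j}\rightharpoondown\omega$, and localize the integrated variational inequality through the denumerable family of Section~\ref{sec:Choice Den Set} --- noting, as the paper does, that no restriction to $\mathrm{co}\,\mathcal{S}'$ is needed because $\sigma_{P}\mapsto\sigma\times\sigma_{P}$ is continuous for every $\sigma\in\mathcal{S}$. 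The only difference is cosmetic (you omit the intermediate representation formula for $\mathfrak{h}(\sigma_{P},t,\bar{\sigma}(t))$ that the paper derives in its Step~2, which is not needed for the stated conclusions $i)$--$iv)$).
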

\begin{proof}
\textbf{Step 1: } The first part of the proof retraces the step 1 of
theorem \ref{thm:Hyper-Relaxed} and here we omit the details. We
just recall that, if $\sigma^{j}\rightharpoondown\sigma$ weakly-{*}
in $\mathcal{S},$ then also $\sigma^{j}\times\sigma_{P}\rightharpoondown\sigma\times\sigma_{P}$
weakly-{*} in $\mathcal{Q}.$ Furthermore the mapping $\sigma_{P}\mapsto\sigma\times\sigma_{P}$
is continuous as well as the function $\sigma_{P}\mapsto\hat{y}^{j}(\sigma\times\sigma_{P})$
for every $j.$ This remark justifies the use of theorem \ref{thm:Hyper-Relaxed},
step 1 arguments. The perturbed problems $(RP^{j})$ have $C^{1}$
data and solutions $\sigma^{j}\in\mathcal{S}$. It follows that theorem
(\cite{key-4}, X.3.7, pp 512) can be applied. In particular from
(\cite{key-4}, X.3.5, pp 505, point (1)), it follows that there exist
$l_{0}^{j}\geq0,$ $l_{1}^{j}\in\mathbb{R}^{n}$ and $\omega^{j}\in\mathrm{f.r.m.}^{+}(\mathcal{S}_{P})$
such that, 
\[
0<l_{0}^{j}+|l_{1}^{j}|+\omega^{j}(\mathcal{S}_{P})\leq1
\]
and, considering a subsequence of index $J_{2}\subset J_{1},$ we
obtain $l_{0}\geq0,$ $l_{1}\in\mathbb{R}^{n}$ and $\omega\in\mathrm{f.r.m.}^{+}(\mathcal{S}_{P})$
such that
\[
\lim_{j\in J_{2}}l_{0}^{j}=l_{0},\qquad\lim_{j\in J_{2}}l_{1}^{j}=l_{1},\qquad\lim_{j\in J_{2}}\omega^{j}=\omega\quad\mathrm{weakly-*}
\]
and such that
\[
0<l_{0}+|l_{1}|+\omega(\mathcal{S}_{P})\leq1.
\]

From lemma \ref{lem:Properties_Perturbed_Prob} $(i)$ and using the
result of (\cite{key-4}, Theorem VI.I.6, pp. 348), there exists $J_{3}\subset J_{2}$
such that
\[
\lim_{j\in J_{3}}y^{j}(\sigma^{j})(t)=\lim_{j\in J_{3}}y(\sigma^{j})(t)=y(\bar{\sigma})(t)
\]
and, since $\sigma^{j}\times\sigma_{P}\rightharpoondown\bar{\sigma}\times\sigma_{P}$
in $\mathcal{Q}$, we have 
\[
\lim_{j\in J_{3}}\hat{y}^{j}(\sigma^{j}\times\sigma_{P})(t)=\lim_{j\in J_{3}}\hat{y}(\sigma^{j}\times\sigma_{P})(t)=\hat{y}(\bar{\sigma}\times\sigma_{P})(t)
\]
uniformly with respect to $t\in[t_{0},t_{1}],$ $\sigma_{P}\in \mathcal{S}_{P}.$ (Again, the fact that $\sigma^{j}\rightharpoondown \bar{\sigma}$ follows from the optimality of $\sigma^{j}$. Indeed
$$ \lim_{j\in J_{3}} h^{j}_{0}(y^{j}(\sigma^{j})(t_{1})=h_{0}(y(\bar\sigma)(t_{1})) \leq \lim_{j\in J_{3}} h^{j}_{0}(y^{j}(\sigma)(t_{1})=h_{0}(y(\sigma)(t_{1}))$$
for all $\sigma \in \mathcal{S}$).

By the same analysis described in theorem \ref{thm:Hyper-Relaxed},
there exist $J\subset J_{3},$ $\mathcal{H}_{0},\mathcal{H}_{1}\in\mathbb{R}^{n}$
such that 
\[
\lim_{j\in J}\partial_{x}h_{0}^{j}(y^{j}(\sigma^{j})(t_{1}))=\mathcal{H}_{0},\qquad\lim_{j\in J}\partial_{x}h_{1}^{j}(y^{j}(\sigma^{j})(t_{1}))=\mathcal{H}_{1},
\]
and, for each fixed $\sigma_{P}\in\mathcal{S}_{P},$ a vector $\hat{\mathcal{H}}(\sigma_{P})$
such that 
\[
\lim_{j\in J}\partial_{x}\hat{h}(\hat{y}^{j}(\sigma^{j}\times\sigma_{P})(t_{1}))=\hat{\mathcal{H}}(\sigma_{P}).
\]
Relations $i)$ and $ii)$ are proved. Since the function $\hat{\mathcal{H}}:\mathcal{P}\rightarrow \mathbb{R}^{n}$ is the pointwise limit of the sequence of continuous functions $\{ \sigma_{P} \mapsto\partial_{x}\hat{h}(\hat{y}^{j}(\sigma^{j}\times\sigma_{P})(t_{1})) \}_{j\in J}$, it follows that $\hat{\mathcal{H}}(.)$ is $\mathcal{B}(\mathcal{S}_{P})-$measurable.

\textbf{Step 2}: From (\cite{key-4}, Theorem X.3.7, pp. 512, point
$(3)$), we have 
\[
\hat{h}^{j}(\hat{y}^{j}(\sigma^{j}\times\sigma_{P}^{*})(t_{1}))=\max_{\sigma_{P}\in\mathcal{S}_{P}}\hat{h}^{j}(\hat{y}^{j}(\sigma^{j}\times\sigma_{P})(t_{1}))\qquad\omega^{j}-a.a.\:\sigma_{P}^{*}\in\mathcal{S}_{P}.
\]
Expressing the relation above in integral form and using the same convergence analysis showed in theorem \ref{thm:Hyper-Relaxed}, Step 2, we obtain
\[
\hat{h}(\hat{y}(\bar{\sigma}\times\sigma_{P}^{*})(t_{1})=\max_{\sigma_{P}\in\mathcal{S}_{P}}\hat{h}(\hat{y}(\bar{\sigma}\times\sigma_{P})(t_{1}))\quad\omega-a.a.\:\sigma_{P}^{*}\in\mathcal{S}_{P}.
\]
We set 
\[
\hat{k}^{j}(\sigma_{P})(t):=\partial_{x}\hat{h}^{j}(\hat{y}^{j}(\sigma^{j}\times\sigma_{P})(t_{1}))\hat{Z}^{j}(\sigma_{P})(t),
\]
and
\[
\mathfrak{h}^{j}(\sigma_{P},t,s):=\max_{v\in v}\hat{k}^{j}(\sigma_{P})(t)\cdot\intop\hat{f}^{j}(t,\hat{y}^{j}(\sigma^{j}\times\sigma_{P})(t),u,v)s(du),\quad(s\in\mathcal{S}).
\]
From (\cite{key-4}, Theorem X.3.7, pp. 512, point $(4)$ ), it follows
that  
\[
\mathfrak{h}^{j}(\sigma_{P},t,\sigma^{j}(t))=\intop\hat{k}^{j}(\sigma_{P})(t)\hat{f}^{j}(t,\hat{y}^{j}(\sigma^{j}\times\sigma_{P})(t),\sigma^{j}(t),v)\sigma_{P}(t)(dv),
\]
for $\omega^{j}-$a.a. $\sigma_{P}\in\mathcal{S}_{P},$ a.e. $t\in[t_{0},t_{1}].$  Preliminary, we observe that the function $(\sigma_{P},t)\rightarrow\mathfrak{h}^{j}(\sigma_{P},t,\sigma(t))$ is $\omega^{j}\times dt-$integrable for every $\sigma\in\mathcal{S}.$ We can write the relation above as
\[
\intop\omega^{j}(d\sigma_{P})\intop_{t_{0}}^{t_{1}}[\mathfrak{h}^{j}(\sigma_{P},t,\sigma^{j}(t))-
\]
\[
-\intop\hat{k}^{j}(\sigma_{P})(t)\hat{f}^{j}(t,\hat{y}^{j}(\sigma^{j}\times\sigma_{P})(t),\sigma^{j}(t),v)\sigma_{P}(t)(dv)]dt=0
\]
and, using the same procedure carried out in theorem \ref{thm:Hyper-Relaxed}, Step 2, where this time we deal with the functions $$I^{j}(\sigma_{P}):= \intop_{t_{0}}^{t_{1}}[\mathfrak{h}^{j}(\sigma_{P},t,\sigma^{j}(t))-\intop\hat{k}^{j}(\sigma_{P})(t)\hat{f}^{j}(t,\hat{y}^{j}(\sigma^{j}\times\sigma_{P})(t),\sigma^{j}(t),v)\sigma_{P}(t)(dv)]dt$$
and the pointwise limit
$$I(\sigma_{P}):= \intop_{t_{0}}^{t_{1}}[\mathfrak{h}(\sigma_{P},t,\bar{\sigma}(t))-\intop\hat{k}(\sigma_{P})(t)\hat{f}(t,\hat{y}(\bar{\sigma} \times\sigma_{P})(t),\bar{\sigma}(t),v)\sigma_{P}(t)(dv)]dt,$$
it follows that

\[
\mathfrak{h}(\sigma_{P},t,\bar{\sigma}(t))=\intop\hat{k}(\sigma_{P})(t)\hat{f}(t,\hat{y}(\bar{\sigma}\times\sigma_{P})(t),\bar{\sigma}(t),v)\sigma_{P}(t)(dv),
\]
 $\omega-$a.a. $\sigma_{P}\in\mathcal{S}_{P},$ a.e. $t\in[t_{0},t_{1}].$
We observe that, in this case, we do not use $\tilde{\zeta}$
in the convergence analysis.

\textbf{Step 3}: We follow the same approach used in theorem \ref{thm:Hyper-Relaxed}, steps 3-4.\\
Define the functions
\[
k^{j}(t):=\left(l_{0}^{j}\partial_{x}h_{0}^{j}(y^{j}(\sigma^{j})(t_{1}))+l_{1}^{j}\partial_{x}h_{1}^{j}(y^{j}(\sigma^{j})(t_{1}))\right)Z^{j}(t)
\]
and
\[
H^{j}(t,s):=k^{j}(t)\cdot f^{j}(t,y(\sigma^{j})(t),s)+\intop \mathfrak{h}^{j}(\sigma_{P}, t, s)\omega^{j}(\sigma_{P}),
\]
for every $s\in \mathcal{S}$, a.e. $t\in [t_{0}, t_{1}]$. (Notice that this time $s$ is a relaxed control).
It is easy to check that the sequence of functions $\{t \mapsto k^{j}(t)\}_{j\in \mathbb{N}}$ is continuous and $\{t \mapsto H^{j}(t,s)\}_{j\in \mathbb{N}}$ is integrable for every $s\in \mathcal{S}$. 
From (\cite{key-4}, Theorem X.3.7, point (2)), it follows that
\[
\intop H^{j}(t,u)(s-\sigma^{j})(t)(du)\geq0,
\]
for every $s\in \mathcal{S}$ and a.e. $ t\in[t_{0},t_{1}].$
Recalling that $\sigma^{j}\rightharpoondown \bar{\sigma}$ weakly-*, by a similar (and simpler) convergence analysis described in theorem \ref{thm:Hyper-Relaxed}, we obtain
\[
\intop H(t,u)(s-\bar{\sigma})(t)(du)\geq0
\]
for every $s \in \mathcal{S}$, a.e. $t\in [t_{0},t_{1}]$.\\
From this inequality we can easily
derive the maximum principle $iii)$ integrating with respect the time $t$ over $[t_{0}, t_{1}]$ and using the denumerable family of controls/rational endpoints of $[t_{0}, t_{1}]$ defined
in section \ref{sec:Choice Den Set}. This completes the proof.
\end{proof}

\textbf{Comments:}
\begin{enumerate}

\item Condition (\cite{key-4}, Theorems X.3.5, X.3.7, point (3)) is established for problems described by the implicit constraint 
$$ \hat{\varphi}(\pi):=\varphi(\sigma,\pi) \in A,\qquad \forall \,\pi \in \mathcal{P}$$
where $\sigma\in \mathcal{S}$ is fixed, $A$ is a convex subset of $\mathbb{R}^{n+m}$ with nonempty interior and $\varphi: \mathcal{S}\times \mathcal{P}\rightarrow \mathbb{R}^{n+m}$ is a continuous function. Our formulation of the problem is slightly different. However, we can set $A=(-\infty,0]$, $\pi \mapsto \hat{\varphi}(\pi):=\hat{H}^{j}(\hat{y}(\sigma\otimes\pi)(t_{1})$ (see section \ref{sec:Perturbed-Problem})
%$$ A:=\{ g\in C(\mathcal{P}):\, g(\pi) \leq 0,\; \forall \pi \in \mathcal{P} \}$$
and interpret $\omega^{j}$ as the dual variable $l^{j}\in C(\mathcal{P})^{*}$ such that
$$ l^{j}(\hat{H}^{j}):=\intop \hat{H}^{j}(\hat{y}(\sigma\otimes\pi)(t_{1})\omega^{j}(\pi)=\max \{l^{j}(g):\, g\in C(\mathcal{P}),\,g(\pi)\leq0 \; \forall \, \pi\in \mathcal{P} \}.$$ 
Since the functions $\pi \mapsto \hat{H}^{j}(\hat{y}(\sigma\otimes\pi)(t_{1})$ are non positive and bounded, $\omega^{j} \in \mathrm{f.r.m.}^{+}(\mathcal{P})$ and relation
\[
\hat{H}^{j}(\hat{y}(\sigma\otimes\pi^{*})(t_{1})=\max_{\pi\in\mathcal{P}}\hat{H}^{j}(\hat{y}(\sigma\otimes\pi)(t_{1}),\quad\omega^{j}-\mathrm{a.a.}\,\pi^{*}
\]
formally follows.

\item  Theorems X.3.5, X.3.7 in \cite{key-4} provide also conditions for the optimal initial points $(\bar{b},\bar{b}_{P})$. Here, to sake of clarity, we have not considered these conditions and we have chosen perturbed problems with fixed initial states. However, the analysis could be easily adapted by taking into account perturbed problems with respect to also the initial state condition.

\item The notion of ``relaxed derivative" appears in the adjoint  equations $Z(.)$,  $\hat{Z}(.)(.)$ and the related functions of the theorems \ref{thm:Hyper-Relaxed}, \ref{thm:Relaxed Prob}. The convergence analysis used to obtain such functions avoids some measurability issues which come out dealing with derivative containers (see \cite{key-5}, comments in section 3 and following discussion). In the present theorems, the measurability is guaranteed by the limit process.

\item The use of the function $\mathfrak{h}(.,.,.)$ makes a breakthrough with respect to other necessary conditions for nonsmooth problems obtained in the earlier literature. We believe that theoretical and computational methods based on the present necessary conditions might take advantage of the ``maximization" related to function $\mathfrak{h}(.,.,.)$ and of the ``minimization" related to function $H(.,.)$ (or, to be more precise, the maximization and minimization process related to the sequence of functions $\mathfrak{h}^{j}(.,.,.)$ and $H^{j}(.,.)$, respectively). This will be matter of studies in following papers.

\end{enumerate}

\section{An Example}

Consider the minimax optimal control problem
\[
\left\{ \begin{array}{l}
\mathrm{Minimize}_{u\in \mathcal{U}}\max_{v\in \mathcal{V}}y(u,v)(1)\\
\mathrm{over\: measurable\: functions\:}u(.), v(.)\mathrm{\: such\: that}\\
u(t)\in \{-1,1\},\quad v(t)\in \{-1,1\}\qquad\mathrm{a.e.}\; t\in[0,1]\\
\dot{y}(t)=|y(t)|u(t)v(t)\quad\mathrm{a.e.}\: t\in[0,1]\\
y(0)=1\quad
\end{array}\right. .
\]
This problem can be reformulated as
\[
(E)\,\left\{ \begin{array}{l}
\mathrm{Minimize}_{u\in \mathcal{R}}\; \alpha \\
\mathrm{over\: measurable\: functions\:}u(.), v(.)\mathrm{\: such\: that}\\
u(t)\in \{-1,1\},\quad v(t)\in \{-1,1\}\qquad\mathrm{a.e.}\; t\in[0,1]\\
\dot{\alpha}(t)=0,\\
\dot{y}(t)=|y(t)|u(t)v(t)\quad\mathrm{a.e.}\: t\in[0,1]\\
y(0)=1\quad\\
y(u,v)(1) - \alpha \leq0 \qquad \forall \, v\in \mathcal{V}
\end{array}\right. ,
\] 
We aim to study the hyer-relaxed version of problem$(E)$, which concerns a problem with same data, but where the dynamic constraint is expressed by
\[
y(\sigma\otimes \pi)(t)=1+\int_{0}^{t}ds\int |y(s)|\,  \sigma(s)(du) \int r\, v \pi(s,u)(dv)
\]
and the relation $y(u,v)(1) - \alpha \leq0$ becomes
\[
y(\sigma \otimes \pi)(1) - \alpha \leq0 \qquad \forall \, \pi\in \mathcal{P}.
\]

From Theorem \ref{thm:Hyper-Relaxed}, condition $(i)$, it follows that there exists a sequence $\hat{Z}^{\varepsilon}(\pi)(t)\rightarrow \hat{Z}(\pi)(t)$ converging uniformly w.r.t. $\pi\in\mathcal{P}$ and $t\in[0,1]$ to some function $\hat{Z}(\pi)(t)$. The function $\mathfrak{h}(\pi,t,u)$ has the form

$$\mathfrak{h}(\pi,t,u)=\max_{v\in \{-1,1\}} \hat{Z}(\pi)(t)|y|r\, v=|y||\hat{Z}(\pi)(t)|$$

and does not depend on $r$. This implies that condition $v)$ of Theorem \ref{thm:Hyper-Relaxed} is satisfied for every  $u\in \{ -1, 1 \}$. So we can choose an arbitrary control $\bar{\sigma}(t)=\delta_{u(t)}$ with, for instance, $u(t)\equiv 1$. Plugging such a control into the hyper-relaxed dynamics and looking at the function $y(\sigma \otimes \pi)(1) $, we observe that it is maximum when $\pi(t,u)(dv)=\delta_{u(t)}(dv)$ and that optimal solution is given by the solution of the ordinary differential equation $\dot{y}(t)=|y(t)|$, $y(0)=1$.

\begin{comment}
\subsection{Subsection title}
\label{sec:2}
as required. Don't forget to give each section
and subsection a unique label (see Sect.~\ref{sec:1}).
\paragraph{Paragraph headings} Use paragraph headings as needed.
\begin{equation}
a^2+b^2=c^2
\end{equation}
\end{comment}

\noindent
\section*{Acknowledgments}

 This work was co-funded by the European Union under the 7th Framework Program �FP7-PEOPLE-2010-ITN�, grant agreement number 264735-SADCO. I thank Prof. R. B. Vinter for having brought to my attention many papers on the topic and the anonymous referees for their helpful comments. 
% For one-column wide figures use

%
% For two-column wide figures use

% table caption is above the table

      % Give a unique label
% For LaTeX tables use

%\begin{acknowledgements}
%If you'd like to thank anyone, place your comments here
%and remove the percent signs.
%\end{acknowledgements}

% BibTeX users please use one of
%\bibliographystyle{spbasic}      % basic style, author-year citations
%\bibliographystyle{spmpsci}      % mathematics and physical sciences
%\bibliographystyle{spphys}       % APS-like style for physics
%\bibliography{}   % name your BibTeX data base

% Non-BibTeX users please use

\end{document}